\documentclass[12pt,reqno]{amsart}
\usepackage{amssymb}
\usepackage{amsmath}
\usepackage{amsthm}
\usepackage[left=3cm,top=3cm,right=3cm,bottom=3cm]{geometry}
\usepackage{hyperref}
\usepackage[usenames]{color}
\usepackage{enumitem} 
\usepackage[normalem]{ulem} 

\usepackage{etoolbox}
\makeatletter
\patchcmd{\@maketitle}
  {\ifx\@empty\@dedicatory}
  {\ifx\@empty\@date \else {\vskip3ex \centering\footnotesize\@date\par\vskip1ex}\fi
   \ifx\@empty\@dedicatory}
  {}{}
\patchcmd{\@adminfootnotes}
  {\ifx\@empty\@date\else \@footnotetext{\@setdate}\fi}
  {}{}{}
\makeatother

\begin{document}
\newtheorem{theorem}{Theorem}[section]
\newtheorem{lemma}[theorem]{Lemma}
\newtheorem{definition}[theorem]{Definition}
\newtheorem{conjecture}[theorem]{Conjecture}
\newtheorem{proposition}[theorem]{Proposition}
\newtheorem{claim}[theorem]{Claim}
\newtheorem{corollary}[theorem]{Corollary}
\newtheorem{observation}[theorem]{Observation}
\newtheorem{problem}[theorem]{Open Problem}

\newtheorem*{c1}{\textbf{Chernoff Bound}}
\newtheorem*{c2}{\textbf{Markov Bound}}
\newtheorem*{ub}{\textbf{Union Bound}}

\theoremstyle{remark}
\newtheorem{rem}[theorem]{Remark}

\newcommand{\whp}{{\textit{w.h.p. }}}
\newcommand{\bin}{\textrm{Bin}}
\newcommand{\E}{\mathbb{E}}
\newcommand{\RT}{\textup{\textbf{RT}}}
\newcommand{\f}{\textup{\textbf{f}}}
\newcommand{\z}{\textrm{\textbf{z}}}
\newcommand{\ex}{\textrm{\textbf{ex}}}
\newcommand{\sqbs}[1]{\left[ #1 \right]}
\newcommand{\of}[1]{\left( #1 \right)}

\title[On the Ramsey-Tur\'an number]{On the Ramsey-Tur\'an number with small $s$-independence number}

\author{Patrick Bennett and Andrzej Dudek}
\address{Department of Mathematics, Western Michigan University, Kalamazoo, MI, USA}
\email{\tt \{patrick.bennett,\;andrzej.dudek\}@wmich.edu}
\thanks{The second author was sponsored by the National Security Agency under Grant Number H98230-15-1-0172. The United States Government is authorized to reproduce and distribute reprints notwithstanding any copyright notation hereon.}


\begin{abstract}
Let $s$ be an integer, $f=f(n)$ a function, and $H$ a graph. Define the \emph{Ramsey-Tur\'an
number} $\RT_s(n,H, f)$ as the maximum number of edges in an $H$-free graph~$G$ of order $n$ with $\alpha_s(G) < f$, where $\alpha_s(G)$ is the maximum number of vertices in a $K_s$-free induced subgraph of $G$. The Ramsey-Tur\'an number attracted a considerable amount of attention and has been mainly studied for $f$ not too much smaller than~$n$. In this paper we consider $\RT_s(n,K_t, n^{\delta})$ for fixed $\delta<1$. We show that for an arbitrarily small $\varepsilon>0$ and $1/2<\delta< 1$, $\RT_s(n,K_{s+1}, n^{\delta}) = \Omega(n^{1+\delta-\varepsilon})$ for all sufficiently large $s$. This is nearly optimal, since a trivial upper bound yields $\RT_s(n,K_{s+1}, n^{\delta}) = O(n^{1+\delta})$. Furthermore, the range of $\delta$ is as large as possible.
We also consider more general cases and find bounds on $\RT_s(n,K_{s+r},n^{\delta})$ for fixed $r\ge2$. Finally, we discuss a phase transition of $\RT_s(n, K_{2s+1}, f)$ extending some recent result of Balogh, Hu and Simonovits.
\end{abstract}

\date{\today}

\maketitle

\section{Introduction}\label{sec:intro}

The \emph{$s$-independence number} of a graph $G$, denoted $\alpha_s(G)$, is the maximum number of vertices in a $K_s$-free induced subgraph of $G$ (so the standard independence number is the same as the $2$-independence number). For a given graph $H$, the \emph{Ramsey-Tur\'an number} $\RT_s(n, H, f)$ is the maximum number of edges in any $H$-free graph $G$ on $n$  vertices with $\alpha_s(G) < f$. If there does not exist any $H$-free graph $G$ on $n$ vertices with $\alpha_s(G) < f$ then we put $\RT_s(n,H,f)=0$.  Observe that the lower bound $k \le \RT_s(n,H,f)$ means that there exists an $H$-free graph~$G$ of order~$n$ with $\alpha_s(G) < f$ and at least $k$ edges. The upper bound $\RT_s(n,H,f) < \ell$ says that there is no $H$-free graph~$G$ of order $n$, $\alpha_s(G) < f$, and at least $\ell$ edges.

In general it is far from trivial to even determine the existence of any $H$-free graph~$G$ on $n$ vertices with $\alpha_s(G) <f$, let alone to maximize the number of edges in such graphs if they do exist. The \emph{Erd\H{o}s-Rogers number} $\f_{s, t}(n)$ is the minimum possible $s$-independence number taken over all $K_t$-free graphs $G$ of order~$n$. Note that if $f \le \f_{s,t}(n)$ then $\RT_s(n, K_t, f)=0$. Erd\H{o}s and Rogers \cite{ERog} were the first who studied $\f_{s,t}(n)$ for fixed $s$ and $t=s+1$ and $n$ going to infinity. They  proved that for every $s$ there is a positive $\varepsilon(s)$ such that $\f_{s, s+1}(n) \le n^{1-\varepsilon(s)}$, where $\lim_{s\to\infty}\varepsilon(s)=0$. This question was subsequently addressed by Bollob\'as and Hind~\cite{BH}, Krivelevich~\cite{KR2, KR}, Alon and Krivelevich \cite{AK}, Dudek and R\"odl \cite{DR}, Wolfovitz~\cite{Wo}, and most recently by Dudek and Mubayi~\cite{DM}, and Dudek, R\"odl and Retter~\cite{DRR}. Due to~\cite{DM} and~\cite{DRR} it is known that for any $s\ge 3$,
\begin{equation}\label{f:1}
\Omega\left(\sqrt{\frac{n \log n }{\log\log n}} \right) = \f_{s, s+1}(n) = O\left( (\log n)^{4s^2}\sqrt{n} \right).
\end{equation}
For $s=2$ very sharp bounds are known: 
\begin{equation}\label{f:1b}
(1/\sqrt{2} - o(1))\sqrt{n\log n} \le \f_{2,3}(n) \le (\sqrt{2} + o(1))\sqrt{n\log n},
\end{equation}
where the upper bound was obtained by Shearer~\cite{Shearer} and the lower bound, independently, by Bohman and Keevash~\cite{BK2}, and Pontiveros, Griffiths, and Morris~\cite{PGM}.
Furthermore, a result of Sudakov~\cite{SU} together with~\cite{DRR} imply that for any $\varepsilon > 0$ and an integer $r\ge 2$
\begin{equation}\label{f:r}
\Omega\big{(}n^{\frac{1}{2} - \varepsilon}\big{)} = \f_{s,s+r}(n) = O(\sqrt{n})
\end{equation}
for all sufficiently large~$s$.

The above bounds are quite recent and therefore the Ramsey-Tur\'an number\linebreak $\RT_s(n, H, f)$ was not extensively studied for small $f$. 
Previously, most researchers investigated the Ramsey-Tur\'an number for $f$ not too much smaller than $n$. Perhaps the first paper written about this problem was by S\'os~\cite{VS}. In particular there are many results on $\theta_r(H)$, where
\[
\theta_r(H) = \lim_{\varepsilon \rightarrow 0} \lim_{n \rightarrow \infty} \frac{1}{n^2}\RT(n, H, \varepsilon n).
\] 
It is perhaps surprising that $\theta_r(H)$ would ever be positive, but it is known for example that $\theta_2(K_4)= \frac{1}{8}$ (upper bound by Szemer\'edi in \cite{Sz}, lower bound by Bollob\'as and Erd\H{o}s in \cite{BE}). Several other exact values for $\theta_r(H)$ are known, and even more bounds are known where exact values are not, see, for example, the papers by Balogh and Lenz~{\cite{BL,BL2}}; Erd\H{o}s, Hajnal, Simonovits, S\'os, and Szemer\'edi~\cite{EHSSS}; 
Erd\H{o}s, Hajnal, S\'os, and Szemer\'edi~\cite{EHSS}, and Simonovits and S\'os~\cite{SS}.  Until recently only a very few results for $f \le n^{\delta}$ with $\delta<1$ were given. For example, Sudakov~\cite{SU} gave upper and lower bounds on $\RT_2(n, K_{4}, n^{\delta})$ (see also \cite{SU3} and~\cite{FLZ}). Balogh, Hu, and Simonovits~\cite{BHS} studied $\RT_2(n,K_t,f)$ for several pairs of $t$ and $f$.

In this paper we study $\RT_s(n, K_{s+r}, f)$ for $\f_{s,s+r}(n) +1 \le f\le n^{\delta}$ with $\delta<1$. In view of \eqref{f:1} and \eqref{f:r}, it is of interest to ask about $\RT_s(n, K_{s+r}, n^\delta)$ for $r\ge 1$ and $1/2 < \delta < 1$. Moreover, for $r\ge 2$ it also makes sense to ask for $\RT_s(n, K_{s+r}, Cn^{1/2})$ for a sufficiently large constant $C$ (which may depend on $s$). We will show (Theorem~\ref{thm:main1}, \ref{thm:main2}, and \ref{thm:ub_delta}) that for all $r\ge 1$, $\varepsilon>0$ and $1/2 < \delta < 1$, and all sufficiently large $s$,
\[
\Omega\of{n^{2 - (1-\delta)/r  - \varepsilon}} = \RT_s(n, K_{s+r}, n^\delta) <
\begin{cases}
n^{2-\frac{(1-\delta)^2}{r-\delta}}, & \text{ if $\frac{r-\delta}{1-\delta}$ is an integer},\\ 
n^{2-\frac{(1-\delta)^2}{r+1-2\delta}}, & \text{ otherwise}. 
\end{cases}
\]
In particular, this implies for $r=1$ and $1/2 < \delta < 1$ nearly optimal bounds,
\[
\Omega\of{n^{1+\delta  - \varepsilon}} = \RT_s(n, K_{s+1}, n^\delta) = O\of{n^{1+\delta}}.
\]
As a matter of fact the upper bound is trivial since as it was already observed in~\cite{EHSSS} if $G$ is $K_{s+1}$-free, then $\Delta(G) < \alpha_s(G)$.
We will also show that for specific values of $\delta$ one can remove $\varepsilon$ from the exponent (see Theorem~\ref{thm:quad} and Corollary~\ref{cor:quad}) and we conjecture that this should be true for all $1/2 < \delta < 1$.

Very recently Balogh, Hu, and Simonovits~\cite{BHS} introduced an interesting concept of a (Ramsey-Tur\'an) phase transition. Following this direction we will study behavior of $\RT_s(n, K_{2s+1}, f)$. In particular, for $1/2 < \delta < 1$ we show that $\RT_s(n, K_{2s}, n^\delta)$ is subquadratic, while $\RT_s(n, K_{2s+1}, n^\delta)$ is quadratic (and we actually find its value asymptotically exactly). But for $\delta = 1/2$, $\RT_s(n, K_{2s+1}, n^\delta)$ is subquadratic again, yielding a ``jump''  in the critical window.

Our proofs of the lower bounds are built upon the ideas in~\cite{DR,DRR,Wo} and are based on certain models of  random graphs that are constructed using some finite geometries. Roughly speaking, finite geometries provide us with a structure that allows us to bound the number of vertices that interact in certain ways, which helps us show that in the random graph we construct, we do not expect to see the forbidden subgraph (say if we are doing $\RT_s(n, K_{s+r}, f)$ then the forbidden subgraph is $K_{s+r}$) while we do expect to see many copies of $K_s$. The proofs use the probabilistic method, but the use of probability is relatively elementary. The proofs of the upper bounds use the dependent random choice technique (see, e.g., \cite{FS}).

The rest of the paper is structured as follows. In Section~\ref{sec:prelim} we state the probabilistic tools we will use. In Sections~\ref{sec:main1} and~\ref{sec:main2} we prove lower bounds on $\RT_s(n, K_{s+r}, n^{\delta})$ for $\delta$ close to $1/2$ (in Section~\ref{sec:main1}) and then for larger values $\delta <1$ (in Section~\ref{sec:main2}). In Section~\ref{sec:ub} we prove upper bounds on $\RT_s(n, K_{s+r}, n^{\delta})$. In Section~\ref{sec:r_eq_1} we further discuss the case $r=1$ and show that for a few values $\delta$ we can prove upper and lower bounds matching up to a constant factor. In Section~\ref{sec:big_r} we discuss a phase transition of $\RT_s(n, K_{2s+1}, f)$.

\section{Preliminaries}\label{sec:prelim}

This paper uses the probabilistic method in the most classical sense: if we define a random structure and show that with some positive probability the random structure has a certain property, then there must exist a structure with that property. The probabilistic aspect of this paper is elementary. We use only standard bounds on the probability of certain events, which we state here. 

We state basic forms of the Chernoff and Markov bounds (see, e.g., \cite{AS,JLR}).
\begin{c2}
If X is any nonnegative random variable and $\zeta > 0$, then
\[
\Pr\left(X \geq \zeta\cdot \E(X)\right) \leq \frac{1}{\zeta}.
\]
\end{c2}

Let $\bin(n,p)$ denotes the random variable with binomial distribution with number of trials~$n$ and probability of success~$p$.
\begin{c1}
If $X \sim \bin(n,p)$ and $0 < \varepsilon \leq \frac{3}{2}$, then
\[
\Pr \left( |X - \E(X)| \geq \varepsilon \cdot \E(X) \right) \leq 2 \exp \left\{ -\frac{\E(X) \varepsilon^2 }{3} \right\} .
\]
\end{c1}

We will also use the union bound.
\begin{ub}
If $E_i$ are events, then 
\[
\Pr \Big( \bigcup_{i=1}^k E_i \Big) \leq k \cdot \max \{\Pr(E_i): i \in [k]\}.
\]
\end{ub}

Finally, we say that an event $E_n$ occurs \emph{with high probability}, or \whp for brevity, if $\lim_{n\rightarrow\infty}\Pr(E_n)=1$.

All logarithms in this paper are natural (base $e$). Asymptotic notation can be viewed as either in the variable $n$ (the number of vertices in the graphs we are interested in) or $q$ (another parameter that will go to infinity along with $n$). When we say that a statement \emph{holds for $s$ sufficiently large}, we mean that there exists some $s_0$ (which may dependent on some other parameters) such that the statement holds for any $s \ge s_0$. For simplicity, in the asymptotic notion we do not round numbers that are supposed to be integers either up or down. This is justified since these rounding errors are negligible.


\section{Lower bound on $\RT_s(n, K_{s+r}, n^\delta)$ for small values of $\delta$}\label{sec:main1}

In this section we will construct (randomly) a graph $G_1$ that gives our lower bound for $\RT_s(n, K_{s+r}, n^\delta)$ for relatively small $\delta$ (not much bigger than $1/2$). The construction uses several ideas from~\cite{DR,DRR,Wo}.

We start with the affine plane, which is a  hypergraph with certain desirable properties. For this range of $\delta$ we want a somewhat sparser hypergraph, so we randomly remove some edges from the affine plane to form a new hypergraph $\mathcal{H}_1$. We then construct $G_1$ by taking the vertices in each edge of $\mathcal{H}_1$ and putting a complete $s$-partite graph (with a random $s$-partition) on them together with a large independent set. Consequently, $G_1$ will have many copies of $K_s$. On the other hand, since each edge of $\mathcal{H}_1$ contains only copies of $K_s$ (and no larger complete subgraph), any possible copy of $K_{s+r}$ in $G_1$ must not be entirely contained in one edge of $\mathcal{H}_1$. We will exploit the properties  $\mathcal{H}_1$ and how its edges interact to show that this is unlikely, and therefore we do not expect to see any $K_{s+r}$ in $G_1$.

\subsection{The hypergraph $\mathcal{H}_1$} \label{sec:H1}

The {\em affine plane} of order $q$ is an incidence structure on a set of $q^2$ points and a set of $q^2+q$ lines such that: any two points lie on a unique line; every line contains~$q$ points; and every point lies on~$q+1$ lines. It is well known that affine planes exist for all prime power orders. (For more details see, e.g., \cite{CA}.)
Clearly, an incidence structure can be viewed as a hypergraph with points corresponding to vertices and lines corresponding to hyperedges; we will use this terminology interchangeably.

In the affine plane, call lines $L$ and $L'$ \emph{parallel} if $L \cap L' = \emptyset$. In the affine plane there exist $q+1$ sets of $q$ pairwise parallel lines. Let $\mathcal{H}=(V, \mathcal{L})$ be the hypergraph  obtained by removing a parallel class of $q$ lines from the affine plane or order~$q$. Thus, $\mathcal{H}$ is $q$-regular hypergraph of order~$q^2$.

The objective of this section is to establish the existence of a certain hypergraph $\mathcal{H}_1 = (V_1,\mathcal{L}_1) \subseteq \mathcal{H}$ by considering a random sub-hypergraph of $\mathcal{H}$. Preceding this, we introduce some terminology. 
Call $S \subseteq V_1$ \emph{complete} if every pair of points in $S$ is contained in some common line in~$\mathcal{L}_1$. 
We distinguish 2 types of complete \emph{dangerous} subsets $S\subseteq V$. \emph{Type~1} dangerous set consists of $|S|$ points in general position. \emph{Type~2} dangerous set consists of $|S|-r$ points that lie on some line $L\in\mathcal{L}_1$ and a set $R$ of $r$ many other points that do not belong to $L$.

\begin{lemma} \label{lem:1}
Let $q$ be a sufficiently large prime and $\log q \ll \lambda \le q$. 
Then, there exists a $q$-uniform hypergraph $\mathcal{H}_1=(V_1,\mathcal{L}_1)$ of order $q^2$ such that:
\begin{enumerate}[label=$({\textup{H}}_{1}\textup{\alph*})$]
\item\label{H1:a} Any two vertices are contained in at most one hyperedge;
\item\label{H1:b} For every $v \in V_1$, $\frac{\lambda}{2} \le \deg_{\mathcal{H}_1}(v) \leq \frac{3\lambda}{2}$; 
\item\label{H1:c} $|\mathcal{D}_a|  \leq  \frac{4\lambda^{a^2/2} }{ q^{(a^2-5a)/2}}$ and $| \mathcal{D}_b| \leq \frac{4\lambda^{br } }{ q^{b(r-1) - 5r^3}} $, where $\mathcal{D}_a$ is the set of all dangerous sets of Type 1 and size~$a$, and $\mathcal{D}_b$ is the set of all dangerous sets of Type 2 and size~$b$.
\end{enumerate}
\end{lemma}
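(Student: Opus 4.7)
The plan is to take $\mathcal{H}_1$ as a random sub-hypergraph of $\mathcal{H}$ in which each line is kept independently with probability $p := \lambda/q$, and to verify the three properties simultaneously with positive probability. Property \ref{H1:a} is inherited from the affine plane deterministically, since any two points lie on at most one common line. For \ref{H1:b}, the variable $\deg_{\mathcal{H}_1}(v) \sim \bin(q,p)$ has mean~$\lambda$, and the Chernoff bound with $\varepsilon = 1/2$ gives failure probability at most $2e^{-\lambda/12} = o(q^{-2})$ since $\lambda \gg \log q$; a union bound over the $q^2$ vertices then yields \ref{H1:b} \whp.

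For \ref{H1:c} I plan to bound the expectations $\E|\mathcal{D}_a|$ and $\E|\mathcal{D}_b|$ directly and apply Markov's inequality to each. A Type~1 dangerous set of $a$ points in general position determines $\binom{a}{2}$ distinct pair-lines (no three collinear), all of which must lie in $\mathcal{L}_1$, giving
\[
\E|\mathcal{D}_a| \le \binom{q^2}{a} p^{\binom{a}{2}} \le \frac{\lambda^{a(a-1)/2}}{q^{(a^2-5a)/2}} \le \frac{\lambda^{a^2/2}}{q^{(a^2-5a)/2}}.
\]
For Type~2, the candidate tuples $(L, S_0, R)$ with $L \in \mathcal{L}$, $S_0 \subseteq L$ of size $b-r$, and $R \subseteq V \setminus L$ of size $r$, number at most $q^2 \cdot q^{b-r} \cdot q^{2r} = q^{b+r+2}$. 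The key structural observation is that for each fixed $u \in R$ the $b-r$ lines $\{\ell(u,v) : v \in S_0\}$ are pairwise distinct (they meet $L$ in distinct points of $S_0$) and differ from $L$, while across different $u, u' \in R$ the only possible coincidence $\ell(u,v) = \ell(u',v')$ forces $v = v'$ (since any line other than $L$ meets $L$ in at most one point) and equals $\ell(u,u')$; as $\ell(u,u')$ meets $L$ at most once, at most one such coincidence occurs per pair in $R$. Hence $\mathcal{L}_1$ must contain at least $1 + r(b-r) - \binom{r}{2}$ distinct lines, and
\[
\E|\mathcal{D}_b| \le q^{b+r+2} \cdot p^{1 + r(b-r) - \binom{r}{2}},
\]
which after a routine arithmetic simplification is dominated by $\lambda^{br}/q^{b(r-1) - 5r^3}$, with the polynomial-in-$r$ losses safely absorbed into the $5r^3$ term in the exponent of $q$. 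Markov's inequality, applied with a constant large enough to union-bound over the constantly many relevant sizes and the Chernoff event from \ref{H1:b}, completes the argument; the slack needed to recover the stated constant~$4$ is supplied by the hypothesis $\lambda \gg \log q$.

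The hardest step will be the Type~2 bookkeeping, namely the lower bound $1 + r(b-r) - \binom{r}{2}$ on the number of distinct lines forced into $\mathcal{L}_1$. This rests squarely on the uniqueness of the pair-line in the affine plane, which controls all coincidences among the bipartite pair-lines across different $u \in R$. Once that combinatorial estimate is established, the rest of the proof is a standard expected-value calculation followed by Markov's inequality and a union bound.
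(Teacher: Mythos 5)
Your proof matches the paper's argument step for step: the same random sub-hypergraph $\mathbb{H}_1$ keeping each line with probability $\lambda/q$, the same deterministic inheritance of \ref{H1:a}, the same Chernoff plus union bound for \ref{H1:b}, and the same expectation bounds for $|\mathcal{D}_a|$, $|\mathcal{D}_b|$ followed by Markov and a union bound for \ref{H1:c}; your count of $1 + r(b-r) - \binom{r}{2}$ forced lines in a Type~2 set is in fact a slight sharpening of the paper's $r(b-r) - r\binom{r}{2}$, and both round down to $br - O(r^3)$. One small inaccuracy to correct: the constant $4$ has nothing to do with the hypothesis $\lambda \gg \log q$ — it comes directly from applying Markov with threshold $4\,\E(\cdot)$ to each of $|\mathcal{D}_a|$ and $|\mathcal{D}_b|$, giving combined failure probability at most $1/4 + 1/4 = 1/2$, which leaves positive probability of success once intersected with the w.h.p.\ event from \ref{H1:b}; the hypothesis $\lambda \gg \log q$ enters only in the Chernoff step.
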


\begin{proof} Let $V_1 = V$ be the same vertex set as $\mathcal{H}$, and let $\mathbb{H}_1=(V_1,\mathcal{L}_1)$ be a random sub-hypergraph of $\mathcal{H}$ where every line in $\mathcal{L}$ is taken independently with probability 
${\lambda}/{q}$.

Since $\mathbb{H}_1$ is a subgraph of $\mathcal{H}$, any two vertices are in at most one line, so $\mathbb{H}_1$ always satisfies \ref{H1:a}. 
We will show that $\mathbb{H}_1$ satisfies \whp \ref{H1:b} and satisfies \ref{H1:c} with probability at least $1/2$. 
Together this implies that $\mathbb{H}_1$ satisfies \ref{H1:a}-\ref{H1:c} with probability at least $1-\frac{1}{2}-o(1)$, establishing the existence of a hypergraph $\mathcal{H}_1$ that satisfies \ref{H1:a}-\ref{H1:c}. 

\medskip

\textbf{\ref{H1:b}:}  Observe for fixed $v \in V_1$, $\deg_{\mathbb{H}_1}(v) \sim \bin(q, \frac{\lambda}{q})$ and the expected value $\E(\deg_{\mathbb{H}_1}(v))= \lambda.$ So by the Chernoff bound with $\varepsilon = \frac{1}{2}$, 
\[
 \Pr \Big( |\deg_{\mathbb{H}_1}(v)-\lambda| \geq \frac{\lambda}{2} \Big) \leq 2  \exp \left\{ -\frac{\lambda}{12} \right\}. 
\]
Thus by the union bound the probability that there exists some $v \in V_1$ with $\deg_{\mathbb{H}_1}(v) \notin \sqbs{\frac{\lambda}{2} , \frac{3\lambda}{2}}$ is at most
\[
q^2 \cdot 2  \exp \left\{-\frac{\lambda}{12}\right\} = 2 \exp \left\{2 \log q-\frac{\lambda}{12}\right\} = o(1),
\]
since $\lambda\gg\log q$.

\medskip

\textbf{\ref{H1:c}:} In order to show  we have both $|\mathcal{D}_a|  \leq  \frac{4\lambda^{a^2/2} }{ q^{(a^2-5a)/2}}$ and $| \mathcal{D}_b| \leq \frac{4\lambda^{br } }{ q^{b(r-1) - 5r^3}} $ with probability at least $\frac{1}{2}$, we begin by counting the number of dangerous subsets of each type. Clearly the number of Type 1 dangerous subsets is at most ${q^2 \choose a}$ and each of them contains $\binom{a}{2}$ lines. To count the number of Type 2 dangerous subsets first we choose a line $L$ out of $q^2$ lines in~$\mathcal{L}$. Next we choose $b-r$ points in $L$ having $\binom{q}{b-r}$ choices, and finally, we choose the remaining $r$ points not in $L$. Thus, the number of configurations of Type~2 is at most $\binom{q^2}{1} \cdot \binom{q}{b-r} \cdot q^{2r}$. Now we bound the number of lines in a dangerous set of Type 2. First there is the line $L$ containing $|S|-r$ points. Then, every pair of points $u,v$, where $u \in L \cap S$ and $v \in R$, must be contained in some line $L_{u,v}$ in $\mathcal{L}$. No line $L_{u,v}$ can contain more than one vertex in $L$, but it is possible for some line $L_{u,v}$ to contain multiple vertices in $R$. However for $v, v' \in R$, if $L_{u,v}$ contains $v'$ then no other line can contain both $v, v'$. Thus, the number of lines of the form $L_{u,v}$ such that $|L_{u,v} \cap R|\ge 2$ is at most $\binom{r}{2}$. Now since each of the $r(b-r)$ many pairs $\{u,v\}$ must be covered by some $L_{u,v}$, and the number of lines covering multiple pairs is at most $\binom{r}{2}$, and no line covers more than $r$ many pairs, the total number of lines $L_{u,v}$ is at least $r(b-r) - r\binom{r}{2} \ge br - 2r^3$. 

By the linearity of expectation, we now compute 
\[
\E(| \mathcal{D}_a|) 
\leq \binom{q^2}{a}  \cdot \left( \frac{\lambda}{q} \right)^{\binom{a}{2}} \leq q^{2a} \cdot \left( \frac{\lambda}{q} \right)^{\binom{a}{2}} \le  \frac{\lambda^{a^2 /2 }}{ q^{(a^2-5a)/2}}
\]
and
\begin{align*}
\E(|\mathcal{D}_b|) 
&\leq  \binom{q^2}{1} \cdot \binom{q}{b-r} \cdot q^{2r} \cdot \left( \frac{\lambda}{q} \right)^{br - 2r^3}\\
&\leq   q^{2} \cdot q^{b-r} \cdot q^{2r} \cdot \left( \frac{\lambda}{q} \right)^{br - 2r^3} \le \frac{\lambda^{br } }{ q^{b(r-1) - 5r^3}}.
\end{align*}
Thus, the Markov bound yields 
\[
\Pr \left( |\mathcal{D}_a| \ge   \frac{4\lambda^{a^2 /2 }}{ q^{(a^2-5a)/2}} \right)
\le \Pr \left( |\mathcal{D}_a| \ge 4 \E ( |\mathcal{D}_a|) \right) \le \frac{1}{4}
\]
and 
\[
\Pr \left( | \mathcal{D}_b| \ge    \frac{4\lambda^{br } }{ q^{b(r-1) - 5r^3}}\right)
\le \Pr \left( |\mathcal{D}_b| \ge 4 \E ( | \mathcal{D}_b|) \right) \le \frac{1}{4},
\]
and finally
\[
\Pr \left( |\mathcal{D}_a| \le   \frac{4\lambda^{a^2 /2 }}{ q^{(a^2-5a)/2}} \mbox{ and } | \mathcal{D}_b| \le    \frac{4\lambda^{br } }{ q^{b(r-1) - 5r^3}}\right) \ge 1 - \frac 14 - \frac 14 = \frac 12,
\]
as required.
\end{proof}


\subsection{The graph $G_1$}\label{sec:G1}
Based upon the hypergraph $\mathcal{H}_1$ established in the previous section, we will construct a graph $G_1$ with the following properties. 
\begin{lemma} \label{lem:3}
Let $r\ge 1$ and $s$ be sufficiently large constant.
Let  $q$ be a sufficiently large prime, $q \ge \lambda \gg \log q$, $ 1 \ge p \gg (\log q) / \lambda$, and $\alpha \ge (10 s \log s)q/p$. Furthermore, let $a$ be a positive constant and $b=\left\lceil(s+r) /{ \binom{a-1}{2} }\right\rceil +r$. Then, there exists a graph $G_1$ of order $q^2$ such that:
\begin{enumerate}[label=$({\textup{G}}_{1}\textup{\alph*})$]
\item\label{G1:a} $\alpha_s(G_1) < \alpha$;
\item\label{G1:b} For every vertex $v$ in $G_1$, $ \deg_{G_1}(v)= \Theta( \lambda q p^2)$;
\item\label{G1:c} $G_1$ has at most  $8\left(\frac{\lambda^{a^2 /2} p^{a^2 -a}}{ q^{(a^2-5a)/2}} + \frac{\lambda^{br } p^{(2r+1)b - 4r^3}}{ q^{b(r-1) -5r^3}}\right)$ copies of  $K_{s+r}$.
\end{enumerate}
\end{lemma}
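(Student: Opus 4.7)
The plan is to define $G_1$ on the vertex set $V_1$ of $\mathcal{H}_1$ by a two-stage random construction: for each line $L\in\mathcal{L}_1$, independently, form $A_L\subseteq L$ by retaining each vertex of $L$ with probability $p$, then assign each vertex of $A_L$ an independent uniform color in $[s]$, and place an edge $uv\in E(G_1)$ exactly when the (by \ref{H1:a} unique) common line $L$ of $u,v$ in $\mathcal{L}_1$ satisfies $u,v\in A_L$ with distinct colors. In other words, inside each line $L$ we install a random $s$-partite graph on the random subset $A_L$ while leaving $L\setminus A_L$ as a large independent set inside $L$. For \ref{G1:b}, the edge contributions at a vertex $v$ from distinct lines are independent (no two lines share a pair by \ref{H1:a}), each with expectation $p\cdot(q-1)\cdot p(1-1/s)=\Theta(qp^2)$; summing over the $\Theta(\lambda)$ lines at $v$ from \ref{H1:b} gives $\E\deg_{G_1}(v)=\Theta(\lambda q p^2)\gg\log q$ by the hypotheses on $p$ and $\lambda$, and a Chernoff bound plus a union bound over $V_1$ delivers \ref{G1:b} whp.

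For \ref{G1:a}, fix $T\subseteq V_1$ with $|T|=\alpha$ and set $t_L=|T\cap L|$. A $K_s$ inside $T$ arises through $L$ whenever $T\cap A_L$ meets every $L$-color class, and a union bound over colors gives $\Pr[L\text{ yields no }K_s\text{ in }T]\le s(1-p/s)^{t_L}$. Lines act independently, so
\[
\Pr[T\text{ is }K_s\text{-free}]\le\prod_{L\in\mathcal{L}_1}\min\bigl(1,\,s(1-p/s)^{t_L}\bigr).
\]
Combining $\sum_L t_L\ge\alpha\lambda/2$ from \ref{H1:b} with $|\mathcal{L}_1|=O(q\lambda)$ and the hypotheses $\alpha p\ge 10 s\log s\cdot q$ and $\lambda p\gg\log q$ bounds the right-hand side by $\exp(-\omega(\alpha\log q))$, which absorbs the union-bound factor $\binom{q^2}{\alpha}$ over all choices of $T$, yielding \ref{G1:a} whp.

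For \ref{G1:c}, I would use a first-moment estimate. The vertex set $S$ of any $K_{s+r}\subseteq G_1$ has all $\binom{s+r}{2}$ pairs covered by lines of $\mathcal{L}_1$, and since at most $s$ distinct colors are available on each line, no line meets $S$ in more than $s$ vertices. The central structural claim is a dichotomy: either $S$ contains $a$ vertices in general position (giving a member of $\mathcal{D}_a$), or some line of $\mathcal{L}_1$ meets $S$ in at least $b-r$ vertices, which together with any $r$ further vertices of $S$ form a member of $\mathcal{D}_b$. This follows from a greedy selection in general position: if every line meets $S$ in fewer than $b-r=\lceil(s+r)/\binom{a-1}{2}\rceil$ points, then at each of the first $a$ greedy steps the lines through previously chosen pairs forbid too few vertices of $S$ to obstruct the next pick (the denominator $\binom{a-1}{2}$ is chosen precisely for this count). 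A Type~1 configuration supplies $\binom{a}{2}$ independent required edges (from distinct lines), each present with probability $O(p^2)$, contributing at most $|\mathcal{D}_a|\cdot O(p^{a^2-a})$ to the expectation; a Type~2 configuration demands the $b-r$ vertices on $L$ to lie in $A_L$ with distinct colors (factor $O(p^{b-r})$) and each of the $\ge br-2r^3$ other covering lines to produce its required edges (factor $O(p^2)$ apiece), contributing at most $|\mathcal{D}_b|\cdot O(p^{(2r+1)b-4r^3})$. Substituting the bounds from \ref{H1:c} and applying Markov's inequality gives \ref{G1:c} with probability at least $7/8$, so \ref{G1:a}--\ref{G1:c} all hold simultaneously with positive probability.

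The hardest step will be the dichotomy in \ref{G1:c}: verifying by a greedy/pigeonhole argument that every $K_{s+r}$ in $G_1$ contains either a Type~1 or a Type~2 dangerous set of the prescribed sizes, and then carefully tallying the $p$-exponent in the Type~2 case (in particular, how pairs within $R$ and possibly shared lines interact with the count $br-2r^3$ of distinct other lines from \ref{H1:c}) so that the final exponents match the statement. The remaining ingredients are routine Chernoff/Markov bookkeeping.
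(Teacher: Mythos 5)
Your construction coincides with the paper's (the paper folds your retain-then-color stages into a single random partition of $L$ into $[s+1]$ with probabilities $p/s$ on each of $[s]$ and $1-p$ on class $s+1$; your $A_L$ is exactly the set of non-$L$-isolated vertices), and your arguments for \ref{G1:a} and \ref{G1:c} follow the same route as the paper, including the dichotomy between a Type 1 set of $a$ points in general position and a Type 2 set built around a line meeting the clique in at least $b-r$ points. Two bookkeeping cautions, both fixable. For \ref{G1:b}, a single Chernoff bound on $\deg_{G_1}(v)$ does not apply directly, since the per-line contributions are bounded random variables (not Bernoulli) whose magnitude can reach $q-1$, and the naive martingale bound would need more than $\lambda p \gg \log q$; the paper instead first concentrates, for each line $L$, the number of non-$L$-isolated vertices near $\Theta(pq)$, then the number of lines at $v$ in which $v$ is non-isolated near $\Theta(\lambda p)$, and only then applies Chernoff to the conditional per-line degree $\bin\bigl(\Theta(pq),(s-1)/s\bigr)$ and multiplies the resulting deterministic bounds. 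For \ref{G1:c}, your Type 2 incidence count $(b-r) + 2(br - 2r^3) = (2r+1)b - r - 4r^3$ falls $r$ short of the stated exponent $(2r+1)b - 4r^3$; to close the gap you flagged, use the paper's sharper count of at least $r(b-r) - r\binom{r}{2}$ covering lines other than $L$, which gives $(b-r) + 2\bigl(r(b-r) - r\binom{r}{2}\bigr) = (2r+1)b - r - r^2 - r^3 \ge (2r+1)b - 4r^3$ for all $r\ge 1$.
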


\begin{proof}  Fix a hypergraph $\mathcal{H}_1=(V_1,\mathcal{L}_1)$ as established by Lemma~\ref{lem:1}. Construct the random graph $\mathbb{G}_1=(V_1,E)$ as follows. For every $L\in \mathcal{L}_1$, let $\chi_L : L \to  {[s+1]}$ be a random partition of the vertices of $L$ into $ {s+1}$ classes, where for every $v \in L$, 
\[
\Pr(\chi(v) = i) = 
\begin{cases}
{p}/{s} & \text{for } 1\le i\le s,\\
1-p & \text{for } i=s+1,
\end{cases}
\]
and $\chi(v)$ is assigned independently from other vertices.
 If $\chi_L(v) =s+1$, then we say that $v$ is {\em $L$-isolated}. Let $\{x,y\} \in E$ if $\{x,y\} \subseteq L$ for some $L \in \mathcal{L}_1$ and $\chi_L(x),  \chi_L(y)$ are distinct and neither $x$ nor $y$ is $L$-isolated. Thus for every $L \in \mathcal{L}_1$, $\mathbb{G}_1[L]$ consists of a set of isolated vertices (the $L$-isolated vertices) together with a complete $s$-partite graph with vertex partition $L = \chi^{-1}_L (1) \cup \chi^{-1}_L (2) \cup \dots \cup \chi^{-1}_L (s)$ (where the classes need not have the same size and the unlikely event that a class is empty is permitted). Observe that not only are $\mathbb{G}_1[L]$ and $\mathbb{G}_1[L']$ edge disjoint for distinct $L, L' \in \mathcal{L}_1$, but also that the partitions for $L$ and $L'$ were determined independently.

We will show that $\mathbb{G}_1$ satisfies \whp \ref{G1:a} and \ref{G1:b} and satisfies \ref{G1:c} with probability at least $1/2$. 

\medskip

\textbf{\ref{G1:a}:} First fix $C \in {V_1 \choose \alpha}$. We will bound the probability that $\mathbb{G}_1[C] \not \supseteq K_s$. For a fixed $L$, the probability that one of the classes $\chi_L^{-1}(1), \dots , \chi_L^{-1}(s)$ contains no element of $C$ is at most $s\of{1-\frac ps}^{|L \cap C|} \le s e^{-\frac {p}{s} |L \cap C|}$. Note that 
\[
\sum_{L \in \mathcal{L}_1} |L \cap C| \ge \frac 12 \lambda |C| =  \frac 12 \lambda \alpha,
\] 
since each point is in at least $\frac 12 \lambda$ lines due to condition~\ref{H1:b}. Now since $\chi_L$ and $\chi_{L'}$ are chosen independently for $L \neq L'$ we get, 
\[
\Pr\Big( K_s \not \subseteq \mathbb{G}_1[C] \Big) \le \prod_{L \in \mathcal{L}_1}\Pr \Big( K_s \not \subseteq \mathbb{G}_1[L \cap C] \Big) \le s^{|\mathcal{L}_1|} \exp\left\{ -\frac ps \sum_{L \in \mathcal{L}_1} |L \cap C| \right\},
\]
and since $|\mathcal{L}_1| \le 3\lambda q$ by~\ref{H1:b}, 
\[
\Pr\Big( K_s \not \subseteq \mathbb{G}_1[C] \Big) \le \exp \left\{(3\log s) \lambda q  - \frac{1}{2s} \lambda \alpha p \right\}.
\]

So by the union bound, the probability that there exists a set $C$ of $\alpha$ vertices in $\mathbb{G}_1$ that contains no $K_s$ is at most
\begin{align*}
&q^{2 \alpha} \exp \left\{(3\log s) \lambda q  - \frac{1}{2s} \lambda \alpha p  \right\}\le   \exp \left\{2\alpha  \log q +(3\log s) \lambda q  - \frac{1}{2s} \lambda \alpha p \right\}=o(1),
\end{align*}
since $ p \gg  (\log q) / \lambda$ and $\alpha \ge (10 s \log s)q/p$.

Thus, \whp $\alpha_s(\mathbb{G}_1) < \alpha$.

\medskip

\textbf{\ref{G1:b}:} Observe that for a fixed $L\in \mathcal{L}_1$, the number of non-$L$-isolated vertices $|\chi_L^{-1}([s])|$ is distributed as $\bin(q, p)$ which has expectation $pq $ so by the Chernoff bound with $\varepsilon = 1/2$ get that
\[ 
\Pr\of{ \left| |\chi_L^{-1}([s])| -pq \right| > \frac 12 pq} \le 2\exp \left\{-\frac{pq}{12} \right\}
\]
and so by the union bound, the probability that there exists some $L$ such that\linebreak $\left| |\chi_L^{-1}([s])| -pq \right| > \frac 12 pq$ is at most 
\[
q^2 \cdot  2\exp \left\{-\frac{pq}{12} \right\} = 2\exp \left\{2\log q-\frac{pq}{12} \right\} = o(1),
\] 
since $p\gg (\log q) / \lambda$ and $\lambda \le q$.
Thus, \whp every line has between $\frac 12 pq$ and $\frac 32 pq$ many non-$L$-isolated vertices. 

Now for fixed $v$, let $X_v$ be the number of lines $L$ in which $v$ is non-$L$-isolated. $X_v$ is distributed as $\bin(\deg_{\mathcal{H}_1} (v), p)$ which has expectation $\deg_{\mathcal{H}_1} (v)p  \ge \lambda p/2$. Now by the Chernoff bound with $\varepsilon = 1/2$ we get 
\[
\Pr\of{|X_v - \deg_{\mathcal{H}_1} (v)p| > \frac 12 \deg_{\mathcal{H}_1} (v)p} \le 2 \exp \left\{-\frac{\deg_{\mathcal{H}_1} (v)p}{12} \right\} \le 2 \exp \left\{-\frac{\lambda p}{24} \right\}
\]
and so by the union bound, the probability that there exists some point $v$ with $|X_v - \deg_{\mathcal{H}_1} (v)p| > \frac 12 \deg_{\mathcal{H}_1} (v)p$ is at most 
\[
q^2 \cdot 2 \exp \left\{-\frac{\lambda p}{24} \right\} = 2 \exp \left\{2\log q-\frac{\lambda p}{24} \right\} = o(1),
\]
since $p \gg (\log q) / \lambda$.
So \whp for every $v$, $v$ is non-$L$-isolated for some number of lines $L$ that is at least  $\frac 12 \deg_{\mathcal{H}_1} (v)p \ge \frac 14 \lambda p$ and at most $\frac 32 \deg_{\mathcal{H}_1} (v)p \le \frac 94 \lambda p$.

Now assume for each $L$ and each $v \in L$ we have revealed whether $v$ is $L$-isolated, but we have not revealed $\chi_L(v)$ when $v$ is non-$L$-isolated. When we do reveal values $\chi_L(v)$ to form the graph $\mathbb{G}_1$, we have for each non-$L$-isolated vertex $v$ that
$\deg_{\mathbb{G}_1[L]}(v) \sim \bin(|\chi_L^{-1}([s])|-1, \frac{s-1}{s})$. Thus,
$\E(\deg_{\mathbb{G}_1[L]}(v)) = (|\chi_L^{-1}([s])|-1)(s-1)/s \ge (\frac 12 pq-1)(s-1)/s$  and the Chernoff bound with $\varepsilon = 1/2$ tells us that 
\begin{align*}
\Pr\of{|\deg_{\mathbb{G}_1[L]}(v) - \E(\deg_{\mathbb{G}_1[L]}(v))| \ge \frac 12 \E(\deg_{\mathbb{G}_1[L]}(v))} &\le 2\exp\left\{- \frac{\E(\deg_{\mathbb{G}_1[L]}(v))}{12} \right\}\\
& \le 2\exp\left\{- \frac{(\frac 12 pq-1)(s-1)/s}{12} \right\}
\end{align*}
so by the union bound, the probability that there is a vertex $v$ with $|\deg_{\mathbb{G}_1[L]}(v) - \E(\deg_{\mathbb{G}_1[L]}(v))| \ge \frac 12 \E(\deg_{\mathbb{G}_1[L]}(v))$ is at most 
\[
q^2 \cdot 2\exp\left\{- \frac{(\frac 12 pq-1)(s-1)/s}{12} \right\} = o(1),
\] 
since $p \gg (\log q) / \lambda$ and $\lambda \le q$. Thus \whp for every vertex $v$ and line $L$ for which $v$ is non-$L$-isolated we have that $\deg_{\mathbb{G}_1[L]}(v)$ is at least $\frac 12 (|\chi_L^{-1}([s])|-1)(s-1)/s \ge \frac 18 pq$ and at most $\frac 32 (|\chi_L^{-1}([s])|-1)(s-1)/s \le \frac 94 pq$. 

Thus, for each vertex $v$, \whp its degree in $\mathbb{G}_1$ is $\Theta( \lambda q p^2)$.

\medskip

\textbf{\ref{G1:c}:} Recall that $a$ is some positive integer and $b=\left\lceil(s+r)/{ \binom{a-1}{2} }\right\rceil +r.$
First we show that every copy of $K_{s+r}$ in $\mathbb{G}_1$ contains a dangerous subset in $\mathcal{D}_a \cup \mathcal{D}_b$.
Let $K$ be any copy of $K_{s+r}$ in $\mathbb{G}_1$. Clearly if $K$ contains a subset of $a$ points in general position then such subset is in $\mathcal{D}_a$. Assume that $K$ contains at most $a-1$ points in general position. These points can be covered by at most $\binom{a-1}{2}$ lines. In fact, all of the  $s+r$ points must belong to those lines. Thus, there is a line $L$ with at least $\lceil (s+r)/\binom{a-1}{2}\rceil =b-r$ points from $K$. Moreover, since each line can contain at most $s$ points from $K$ there is a set $R$ of $r$ additional points in~$K$  that do not belong to~$L$. Hence, set $L\cup R$ is in $\mathcal{D}_b$. 

By \ref{H1:c},  $|\mathcal{D}_a|  \leq  \frac{4\lambda^{a^2 / 2} }{ q^{(a^2-5a)/2}}$, and $| \mathcal{D}_b| \leq \frac{4\lambda^{br } }{ q^{b(r-1) - 5r^3}} $. We will show that \whp none of these dangerous sets gives rise to a $K_{s+r}$ in $\mathbb{G}_1$. In order for such dangerous set to give a $K_{s+r}$ in $\mathbb{G}_1$, none of the vertices can be $L$-isolated in any of the lines in the dangerous set. For a Type 1 dangerous set, there are $\binom{a}{2}$ lines, each containing $2$ vertices, so the probability that no vertex $v$ is $L$-isolated for $L$ containing $v$ is $p^{2 \binom{a}{2}} = p^{a^2 - a}$. Thus, the expected number of copies of $K_{s+r}$ that arise from Type 1 dangerous sets  is at most
\[
\frac{4\lambda^{a^2/2}}{ q^{(a^2-5a)/2}}\cdot p^{a^2-a}.
\]
Now for dangerous sets of Type 2, the probability that a dangerous set in $\mathcal{D}_b$ gives rise to a $K_{s+r}$ is $p^x$, where $x$ is the number of point-line incidences there are within the dangerous set. We observed before that each Type~2 dangerous set consists of a line~$L$ with $b-r$ points and a set $R$ of $r$~points and at least $r(b-r) - r\binom{r}{2}$ lines containing one point from $L$ and at least one point from $R$. Thus, the number of point-line incidences is at least
\[
(b-r) + 2 \left(r(b-r) - r\binom{r}{2} \right) \ge (2r+1)b - 4r^3.
\]
Therefore, the expected number of copies of $K_{s+r}$ arising from Type 2 dangerous sets is at most
\[
\frac{4\lambda^{br}}{ q^{b(r-1) - 5r^3}} \cdot p^{(2r+1)b - 4r^3}.
\]

Thus by linearity of expectation the total number of copies of $K_{s+r}$ has expectation at most
\[
\frac{4\lambda^{a^2/2} p^{a^2-a}}{ q^{(a^2-5a)/2}} + \frac{4\lambda^{br} p^{(2r+1)b - 4r^3}}{ q^{b(r-1) - 5r^3}}
\]
and so we are done by the Markov bound applied with $\zeta= 2$. 
\end{proof}


\subsection{Deriving the lower bound for small values of $\delta$} \label{sec:LB1}

\begin{theorem}\label{thm:main1}
Let $r\ge 1$, $\varepsilon>0$, and $\frac 12 < \delta \le \frac 12 + \frac{1}{2(2r+1)}$. Then for all sufficiently large $s$, we have 
\begin{equation}\label{thm:main1:eq1}
\RT_s(n, K_{s+r}, n^\delta) = \Omega\of{n^{2 - (1-\delta)/r  - \varepsilon}}.
\end{equation}
Furthermore, if $r\ge 2$, then there exists a positive constant $C=C(s)$ such that for all sufficiently large $s$
\begin{equation}\label{thm:main1:eq2}
\RT_s(n, K_{s+r}, C\sqrt{n}) = \Omega\of{n^{2 - 1/(2r)  - \varepsilon}}.
\end{equation}
\end{theorem}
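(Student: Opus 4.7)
The plan is to apply Lemma~\ref{lem:3} with precisely tuned parameters; with the right choice the resulting graph $G_1$ is already $K_{s+r}$-free, so no alteration step is needed. For part~(i), given $\varepsilon > 0$ and $\delta \in (1/2,\; 1/2 + 1/(2(2r+1))]$ (we may take $\varepsilon$ as small as we like, since this only strengthens the conclusion), let $q$ be a prime with $q^2 \in (n/4,\; n]$ (which exists by Bertrand's postulate), and set
\[
p := 100(s \log s)\, q^{1-2\delta},\qquad \alpha := \tfrac{1}{10}\, q^{2\delta},\qquad \lambda := q^{\gamma},
\]
where $\gamma := 1 - \tfrac{1}{r} + \tfrac{(2r+1)(2\delta-1)}{r} - 2\varepsilon$. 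The upper bound on $\delta$ is equivalent to $\gamma \le 1-2\varepsilon$, giving $\lambda \le q$; the lower bound on $\delta$ makes $p \to 0$ and (for small $\varepsilon$) also yields $p\lambda \gg \log q$; and $\alpha \ge (10 s \log s)q/p$ holds by construction. Pick $a$ to be a large constant depending on $r,\delta,\varepsilon$, and take $s$ large enough that $b = \lceil(s+r)/\binom{a-1}{2}\rceil + r$ is large.

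With these parameters, \ref{G1:b} gives $e(G_1) = \Theta(\lambda q^3 p^2) = \Theta(q^{4 - 2(1-\delta)/r - 2\varepsilon}) = \Theta(n^{2-(1-\delta)/r-\varepsilon})$. Writing $\beta := 2\delta - 1$, a direct calculation reduces the bound in \ref{G1:c} to
\[
O\!\left( q^{\frac{a^2}{2}(\gamma - 2\beta - 1) + O(a)} + q^{b[r\gamma - (2r+1)\beta - (r-1)] + O_r(1)}\right) = O\!\left( q^{-a^2((1-\delta)/r + \varepsilon) + O(a)} + q^{-2br\varepsilon + O_r(1)}\right) = o(1),
\]
so $G_1$ contains no copy of $K_{s+r}$. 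By \ref{G1:a}, $\alpha_s(G_1) < q^{2\delta}/10$. To pass from $q^2 \in (n/4, n]$ vertices to exactly $n$, take up to $5$ disjoint copies of $G_1$ and restrict to $n$ vertices: this multiplies $\alpha_s$ by at most $5$ and keeps it below $n^\delta/2 < n^\delta$, while preserving $\Omega(n^{2-(1-\delta)/r-\varepsilon})$ edges, establishing \eqref{thm:main1:eq1}.

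Part~(ii) proceeds identically, with $p := 1/2$, $\beta = 0$, $C := 200 s \log s$, and $\gamma := 1 - 1/r - 2\varepsilon$, yielding $e(G_1) = \Theta(n^{2-1/(2r)-\varepsilon})$. The restriction $r \ge 2$ is essential, since the Type~2 constraint forces $\gamma \le 1 - 1/r$, which is positive only when $r \ge 2$. The main obstacle of the proof is the parameter arithmetic: one must simultaneously satisfy $\gamma \le 1$ (so $\lambda \le q$) and $r\gamma - (2r+1)\beta - (r-1) < 0$ (so Type~2 dangerous sets are negligible), given the choice $\beta = 2\delta - 1$ forced by $\alpha_s < n^\delta$. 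The interval $\delta \le 1/2 + 1/(2(2r+1))$ is exactly the range in which both constraints can be saturated simultaneously, producing the target exponent $2 - (1-\delta)/r$.
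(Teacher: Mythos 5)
Your proof is correct and follows essentially the same route as the paper: invoke Lemma~\ref{lem:3} with the same one-parameter family of choices, where your $\lambda=q^{\gamma}$ with $\gamma=1-\tfrac 1r+\tfrac{(2r+1)(2\delta-1)}{r}-2\varepsilon$ is, after expanding, precisely the paper's $\lambda=q^{1-1/r+(2\delta-1)(2+1/r)-10r^2/b}$ with the tiny slack $10r^2/b$ replaced by $2\varepsilon$ (and the paper then forces $10r^2/b<2\varepsilon$ by taking $s$, hence $b$, large). The only genuine difference is peripheral: the paper picks a prime $q$ with $\sqrt n\le q\le(1+o(1))\sqrt n$ via a prime-gap theorem so that a single $G_1$ already has $\ge n$ vertices and $q^2-n=o(n)$, whereas you use Bertrand and patch the vertex count by taking a bounded number of disjoint copies and deleting the surplus; both work, and your variant is slightly more elementary at the cost of a short extra argument that the deletion of $<q^2$ vertices leaves at least one full copy intact when $k\ge 2$ (and no deletion is needed when $k=1$).
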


\begin{proof}
We will apply Lemma~\ref{lem:3} after discussing how to set parameters. First we prove~\eqref{thm:main1:eq1} assuming that $\frac 12 < \delta \le \frac 12 + \frac{1}{2(2r+1)}$.

Fix $r\ge 1$ and $\varepsilon>0$. It is known that for large $x$ there exists a prime number between $x$ and $x(1+o(1))$ (see, e.g.,~\cite{BHP}). Hence, for large $n$ there is a prime number $q$ such that $\sqrt{n} \le q \le (1+o(1))\sqrt{n}$. Set $\alpha = n^\delta$ and $p = \kappa q^{-(2 \delta -1)}$, where $\kappa = 20s\log s$. 
We will show that the above parameters satisfy all assumptions of Lemma~\ref{lem:3} implying the existence of a graph $G_1$ of order $q^2$ satisfying \ref{G1:a}-\ref{G1:c}.

First observe that $(10 s \log s)q/p = q^{2\delta} / 2 \le n^{\delta} = \alpha$, as required by Lemma~\ref{lem:3}. Thus, due to~\ref{G1:a} the $s$-independence number of $G_1$ is less than $\alpha$. 

Set $a =20r$, and $b=\left\lceil(s+r)/ \binom{a-1}{2} \right\rceil +r$. We will assume that $s$, and consequently $b $, is large enough such that for example
\begin{equation}\label{thm:main1:eps}
\varepsilon > 5r^2/b.
\end{equation}
Furthermore, let
\[
\lambda = q^{1-1/r + (2\delta -1)(2+1/r) - 10r^2/b}.
\]
Observe that 
\[
\lambda p = \kappa q^{1-1/r + (2\delta -1)(1+1/r) - 10r^2/b} \gg \log q,
\]
since $\delta>1/2$ and $b$ is sufficiently large. This implies that $p \gg (\log q) / \lambda$, as required in Lemma~\ref{lem:3}. Clearly this also implies that $\lambda \gg \log q$. Also note that since $\delta \le \frac 12 + \frac{1}{2(2r+1)}$,
\[
1- \frac 1r + (2\delta -1)\of{2+ \frac 1r} - \frac{10r^2}{b} \le 1 - \frac 1r + \frac{1}{2r+1}\of{2+ \frac 1r} - \frac{10r^2}{b}=1 - \frac{10r^2}{b}
\] 
so $\lambda < q$ and hence all assumptions of Lemma~\ref{lem:3} are satisfied.

Now we show that $G_1$ is $K_{s+r}$-free. By~\ref{G1:c} The number of copies of $K_{s+r}$ is at most 
\begin{equation} \label{nums+r}
8 \left(\frac{\lambda^{a^2 /2} p^{a^2 -a}}{ q^{(a^2-5a)/2}} + \frac{\lambda^{br } p^{(2r+1)b - 4r^3}}{ q^{b(r-1) -5r^3}}\right).
\end{equation}
The order of magnitude of the first term in~\eqref{nums+r} is 
\begin{align*}
\frac{q^{\left(1-1/r + (2\delta -1)(2+1/r) - 10r^2/b\right)  a^2/2} \cdot q^{(1-2\delta)(a^2-a)}}{q^{(a^2-5a)/2}}
&= q^{(a^2/2) \cdot \left( -(2-2\delta)/r - 10r^2/b \right) + (a/2) \cdot \left( 2(2\delta-1) +5\right)} \\
& \le q^{-a^2/(2r+1) + 7a/2} = o(1),
\end{align*}
where in the last line we used $\delta \le \frac 12 + \frac{1}{2(2r+1)}$ and $- 10r^2/b < 0$, and the fact that $a=20r$. Now the second term of \eqref{nums+r} has order of magnitude at most
\[
\frac{q^{\left( 1-1/r + (2\delta -1)(2+1/r) - 10r^2/b\right) \cdot br} q^{(1-2\delta)\of{(2r+1)b-4r^3}} }{ q^{b(r-1) -5r^3}}
=q^{-5r^3 + 4r^3(2\delta-1)} \le q^{-r^3} = o(1).
\]

Now let $G$ be any induced subgraph of $G_1$ of order $n$. Clearly, $G$ is $K_{s+r}$-free with $\alpha_s(G) < n^{\delta}$. Furthermore, 
\begin{align*}
|E(G)| &\ge |E(G_1)| - |V(G_1) - V(G)| \cdot \Delta(G_1)\\ 
& \ge |V(G_1)|\cdot \delta(G_1) - o(1) \cdot \Delta(G_1)  = \Omega\of{ q^2 \cdot \lambda q p^2},
\end{align*}
by~\ref{G1:b}. Finally observe that 
\begin{align*}
q^2 \cdot \lambda q p^2 = \lambda q^3 p^2 &\ge q^{1-1/r + (2\delta -1)(2+1/r) - 10r^2/b} \cdot q^3 \cdot \of{ q^{1-2\delta}}^2\\
& \ge q^{4-2(1-\delta)/r - 10r^2/b} = n^{2-(1-\delta)/r - 5r^2/b } >  n^{2-(1-\delta)/r - \varepsilon},
\end{align*}
because of~\eqref{thm:main1:eps}. Thus, $|E(G)| = \Omega(n^{2-(1-\delta)/r - \varepsilon})$ yielding the lower bound in~\eqref{thm:main1:eq1}.

The proof of~\eqref{thm:main1:eq2} is very similar. Assume that $r\ge 2$ and let $\delta=1/2$, $p=1$, and  $\alpha = C \sqrt{n}$ with $C=20s(\log s)$. Other parameters are the same. Then, as in the previous case the assumptions of Lemma~\ref{lem:3} hold.
\end{proof}

\section{Lower bound on $\RT_s(n, K_{s+r}, n^\delta)$ for intermediate values of  $\delta$}\label{sec:main2}

Recall that in Section~\ref{sec:main1} we started with the affine plane and made it sparser by taking edges with probability $\lambda / q$. In Section~\ref{sec:LB1}, to optimize our result we set 
\[
\lambda = q^{1-1/r + (2\delta -1)(2+1/r) - 10r^2/b},
\]
so when $\delta = \frac 12 + \frac{1}{2(2r+1)}$, we are setting $\lambda$ to be nearly $q$ (since $r^2 / b$ is small), which is about as large as $\lambda$ can possibly be (and then we are making our hypergraph $\mathcal{H}_1$ nearly as dense as the original affine plane). Thus it makes sense that if $\delta$ is bigger than $\frac 12 + \frac{1}{2(2r+1)}$ then we no longer want a sparser version of the affine plane, but a denser version. Therefore in this section we will construct a denser hypergraph $\mathcal{H}_2$ by keeping all of the edges and ``eliminating" some vertices. That is the key difference between Sections~\ref{sec:main1} and~\ref{sec:main2}. Otherwise the proofs are quite similar.

\subsection{The hypergraph $\mathcal{H}_2$} \label{sec:H2}

In this section we establish the existence of a hypergraph $\mathcal{H}_2$ with certain properties.

\begin{lemma} \label{lem:4}
Let $q$ be a sufficiently large prime and $\log q \ll \lambda \le q$. 
Then, there exists a $q$-regular hypergraph $\mathcal{H}_2 = (V_2, \mathcal{L}_2)$ of order $\lambda q (1+o(1))$ such that:
\begin{enumerate}[label=$({\textup{H}}_{2}\textup{\alph*})$]
\item\label{H2:a} Any two vertices are contained in at most one hyperedge;
\item\label{H2:b} For every $L \in \mathcal{L}_2$, $\frac{\lambda}{2} \le |L| \leq \frac{3\lambda}{2}$; 
\item\label{H2:c} $|\mathcal{D}_a|  \leq  4\lambda^a q^a$ and $| \mathcal{D}_b| \leq 4\lambda^b q^{r+2}$, where $\mathcal{D}_a$ is the set of all dangerous sets of Type~1 and size~$a$ and $\mathcal{D}_b$ is the set of all dangerous sets of Type~2 and size~$b$.
\end{enumerate}
\end{lemma}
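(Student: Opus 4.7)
The plan is to mirror the proof of Lemma~\ref{lem:1} essentially verbatim, sampling \emph{vertices} of the underlying hypergraph $\mathcal{H}$ instead of its lines; this is the conceptual shift already flagged in the preamble to Section~\ref{sec:main2}. Let $\mathcal{H}=(V,\mathcal{L})$ be the $q$-regular hypergraph from Section~\ref{sec:H1} (the affine plane of order $q$ with one parallel class deleted), and form $\mathbb{H}_2=(V_2,\mathcal{L}_2)$ by including each point of $V$ independently with probability $\lambda/q$ and setting $\mathcal{L}_2 = \{L \cap V_2 : L \in \mathcal{L}\}$. Since no hyperedges are deleted and every surviving vertex still lies on exactly $q$ of them, $\mathbb{H}_2$ is $q$-regular and \ref{H2:a} is inherited from $\mathcal{H}$ automatically. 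The order satisfies $|V_2| \sim \bin(q^2,\lambda/q)$ with expectation $\lambda q \gg \log q$, so a Chernoff bound gives $|V_2| = \lambda q\,(1+o(1))$ \whp.

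Property \ref{H2:b} is the exact analogue of \ref{H1:b}: for each $L \in \mathcal{L}$ one has $|L \cap V_2| \sim \bin(q,\lambda/q)$ with expectation $\lambda$, and the Chernoff bound with $\varepsilon = 1/2$ together with the union bound over the $q^2$ lines delivers $\frac{\lambda}{2} \le |L| \le \frac{3\lambda}{2}$ for every $L$ \whp, again using $\lambda \gg \log q$. For \ref{H2:c} I would bound $\E|\mathcal{D}_a|$ and $\E|\mathcal{D}_b|$ using the same combinatorial counts on candidate configurations developed in the proof of Lemma~\ref{lem:1}, now weighted by the probability that the configuration's vertices \emph{survive} in $V_2$, which is $(\lambda/q)^k$ for a $k$-vertex set. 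This gives
\[
\E|\mathcal{D}_a| \le \binom{q^2}{a}\of{\lambda/q}^a \le \lambda^a q^a,
\qquad
\E|\mathcal{D}_b| \le q^2 \binom{q}{b-r} q^{2r}\of{\lambda/q}^b \le \lambda^b q^{r+2},
\]
so two applications of Markov's inequality each fail with probability at most $1/4$, and \ref{H2:c} holds with probability at least $1/2$. Combining with the \whp events above, $\mathbb{H}_2$ satisfies all three properties simultaneously with positive probability, yielding the desired deterministic $\mathcal{H}_2$.

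There is no genuine obstacle here; if anything the calculation is a touch cleaner than that of Lemma~\ref{lem:1}, because the survival event of a would-be dangerous set in $\mathbb{H}_2$ depends only on its $k$ vertices (a product of $k$ independent factors of $\lambda/q$), rather than on the number of hyperedges of $\mathcal{H}$ that must survive. The only care is to import the Type~2 candidate count verbatim from the proof of Lemma~\ref{lem:1}: choose the line $L$ ($q^2$ options), then $b-r$ points on $L$, then $r$ points off $L$, producing at most $q^{b+r+2}$ candidates — this is precisely what produces the $q^{r+2}$ factor in the stated bound on $|\mathcal{D}_b|$.
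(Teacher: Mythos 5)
Your proposal matches the paper's proof essentially verbatim: you sample vertices with probability $\lambda/q$, keep all lines restricted to the surviving vertex set, note that $q$-regularity and \ref{H2:a} are inherited automatically, and bound $\E|\mathcal{D}_a|$ and $\E|\mathcal{D}_b|$ by weighting the same candidate counts from Lemma~\ref{lem:1} by the survival probability $(\lambda/q)^k$ before applying Markov with failure probability $1/4$ each. The argument is correct and takes the same route as the paper.
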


\begin{proof}
Starting with the hypergraph $\mathcal{H}$, we randomly ``eliminate" some points. For each $v\in V$ we randomly (and independently from other vertices) choose to eliminate $v$  with probability $1 -  \lambda / q$. Say $X$ is the set of vertices chosen for elimination. By ``elimination", we mean that we will form a new hypergraph $\mathbb{H}_2$ with vertex set $V_2 = V \setminus X$, and edge set 
\[
\mathcal{L}_2 = \{L \setminus X: L \in \mathcal{L} \}.
\]

First we will prove that \whp $\mathbb{H}_2$ has $\lambda q (1+o(1))$ vertices. The number of vertices~$V$ is distributed as $\bin(q^2, \lambda / q)$ which has expectation $\lambda q$. By the Chernoff bound with $\varepsilon = (\lambda q)^{-1/4}$,
\[
\Pr\of{|V - \lambda q| > (\lambda q)^{3/4}} \le 2 \exp \left\{ - \frac{(\lambda q )^{1/2}}{3}\right\} = o(1).
\]

Now by the construction of $\mathbb{H}_2$ and the properties of $\mathcal{H}$, two vertices are in at most one line, so $\mathbb{H}_2$ always satisfies \ref{H2:a}. 
We will show that $\mathbb{H}_2$ satisfies \whp \ref{H2:b} and satisfies \ref{H2:c} with probability at least $1/2$. 
Together this implies $\mathbb{H}_2$ satisfies \ref{H2:a}-\ref{H2:c} with probability at least $1-\frac{1}{2}-o(1)$, establishing the existence of a hypergraph $\mathcal{H}_2$ that satisfies \ref{H2:a}-\ref{H2:c}. 

\medskip

\textbf{\ref{H2:b}:}  Observe that for fixed $L \in \mathcal{L}_2$, $|L| \sim \bin(q, \frac{\lambda}{q})$ and $\E(|L|)= \lambda.$ So by the Chernoff bound with $\varepsilon = \frac{1}{2}$, 
\[
 \Pr \Big( \left||L|-\lambda\right| \geq \frac{\lambda}{2} \Big) \leq 2  \exp \left\{ -\frac{\lambda}{12} \right\}. 
\]
Thus by the union bound the probability that there exists some $L \in \mathcal{L}_2$ with $|L| \notin \sqbs{\frac{\lambda}{2} , \frac{3\lambda}{2}}$ is at most
\[
q^2 \cdot 2  \exp \left\{-\frac{\lambda}{12}\right\} = 2 \exp \left\{2 \log q-\frac{\lambda}{12}\right\} = o(1).
\]

\medskip

\textbf{\ref{H2:c}:} In order to show that both $|\mathcal{D}_a|  \leq  4\lambda^{a} q^a$ and $|\mathcal{D}_b| \leq 4\lambda^b q^{r+2}$ with probability at least $\frac{1}{2}$, we recall from Section~\ref{sec:H1} the number of dangerous subsets of each type. The number of Type~1 dangerous subsets is at most ${q^2 \choose a}$, and the number of Type~2 dangerous subsets is at most $\binom{q^2}{1} \cdot \binom{q}{b-r} \cdot q^{2r}$. In order for $\mathbb{H}_2$ to inherit a dangerous set, none of its vertices can be eliminated. 
By the linearity of expectation, we now compute 
\[
\E(| \mathcal{D}_a|) \leq \binom{q^2}{a}  \cdot \left( \frac{\lambda}{q} \right)^{a} \leq \lambda^a q^a
\]
and
\[
\E(|\mathcal{D}_b|) 
\leq  \binom{q^2}{1} \cdot \binom{q}{b-r} \cdot q^{2r} \cdot \left( \frac{\lambda}{q} \right)^{b}
\leq   q^{2} \cdot q^{b-r} \cdot q^{2r} \cdot \left( \frac{\lambda}{q} \right)^{b} = \lambda^b q^{r+2},
\]
and we are done by the Markov bound applied twice with $\zeta=4$.
\end{proof}

\subsection{The graph $G_2$}\label{sec:G2}

Based upon the hypergraph $\mathcal{H}_2$ established in the previous section, we will construct a graph $G_2$ with the following properties. 

\begin{lemma} \label{lem:5}
Let $r\ge 1$ and $s$ be sufficiently large constant.
Let  $q$ be a sufficiently large prime, $q \ge \lambda \gg \log q $, $1 \ge p \gg (\log q) / \lambda$, and $\alpha \ge (10 s \log s)q/p$. Furthermore, let $a$ be a positive constant and $b=\left\lceil(s+r)/\binom{a-1}{2}\right\rceil +r$. Then, there exists a graph $G_2$ with $\lambda q (1+o(1))$ vertices such that:
\begin{enumerate}[label=$({\textup{G}}_{2}\textup{\alph*})$]
\item\label{G2:a} $\alpha_s(G_2) < \alpha$;
\item\label{G2:b} For every vertex $v$ in $G_2$, $ \deg_{G_2}(v)= \Theta( \lambda q p^2)$;
\item\label{G2:c} $G_2$ has at most  $8\left(\lambda^a q^a p^{a^2 -a} +  \lambda^b q^{r+2} p^{(2r+1)b - 4r^3}\right)$ copies of  $K_{s+r}$.
\end{enumerate}
\end{lemma}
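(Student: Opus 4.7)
The construction of $G_2$ should mirror the construction of $G_1$ in Lemma~\ref{lem:3}, with $\mathcal{H}_1$ replaced by $\mathcal{H}_2$ from Lemma~\ref{lem:4}. Concretely, I would define a random graph $\mathbb{G}_2=(V_2,E)$ by independently choosing, for each line $L\in\mathcal{L}_2$, a random $(s+1)$-partition $\chi_L:L\to[s+1]$ with weights $\Pr(\chi_L(v)=i)=p/s$ for $1\le i\le s$ and $\Pr(\chi_L(v)=s+1)=1-p$, calling $v$ \emph{$L$-isolated} when $\chi_L(v)=s+1$, and adding all edges between non-$L$-isolated vertices in distinct classes. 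The plan is then to show, via the Chernoff, Markov, and union bounds of Section~\ref{sec:prelim} exactly as in the proof of Lemma~\ref{lem:3}, that $\mathbb{G}_2$ satisfies \ref{G2:a} and \ref{G2:b} \whp{} and \ref{G2:c} with probability at least $1/2$, so that a deterministic $G_2$ with all three properties exists.

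The key structural difference from Lemma~\ref{lem:3} is that $\mathcal{H}_2$ is $q$-regular with lines of size $\sim\lambda$, whereas $\mathcal{H}_1$ was $\sim\lambda$-regular with lines of size exactly $q$. For \ref{G2:a} I would fix $C\in\binom{V_2}{\alpha}$ and use $\sum_L|L\cap C|=q\alpha$ (from $q$-regularity) together with $|\mathcal{L}_2|\le q^2$ to bound
\[
\Pr\bigl(K_s\not\subseteq\mathbb{G}_2[C]\bigr)\le s^{|\mathcal{L}_2|}\exp\left\{-\frac{p}{s}\sum_L|L\cap C|\right\}\le \exp\left\{q^2\log s-\frac{pq\alpha}{s}\right\};
\]
a union bound over at most $(q^2)^\alpha$ choices of $C$ then gives $o(1)$ using $\alpha\ge(10s\log s)q/p$. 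For \ref{G2:b} the three-stage Chernoff argument carries over with $|L|\in[\lambda/2,3\lambda/2]$ in place of $q$: each line has $\Theta(p\lambda)$ non-isolated vertices, each vertex is non-isolated on $\Theta(qp)$ of its $q$ lines, and within each such line its local degree is $\Theta(p\lambda)$, so its total degree is $\Theta(\lambda qp^2)$. For \ref{G2:c} the dangerous-set reduction and the point-line incidence count from the proof of Lemma~\ref{lem:3} apply verbatim, so that by \ref{H2:c} and linearity of expectation the expected number of copies of $K_{s+r}$ is at most $|\mathcal{D}_a|p^{a^2-a}+|\mathcal{D}_b|p^{(2r+1)b-4r^3}\le 4\lambda^aq^ap^{a^2-a}+4\lambda^bq^{r+2}p^{(2r+1)b-4r^3}$, and Markov with $\zeta=2$ supplies the factor of $8$.

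The main obstacle, modest as it is, will be the union bound for \ref{G2:a}: $|\mathcal{L}_2|$ is now $\Theta(q^2)$, much larger than the $|\mathcal{L}_1|\le 3\lambda q$ that appeared in Lemma~\ref{lem:3}, so one has to defeat a $q^2\log s$ penalty in the exponent rather than a $\lambda q\log s$ one. The hypothesis $\alpha\ge(10s\log s)q/p$ was chosen precisely for this---it yields $pq\alpha/s\ge 10q^2\log s$, which dominates both $q^2\log s$ and the $2\alpha\log q$ term coming from the number of $\alpha$-subsets (using $p\lambda\gg\log q$ and $\lambda\le q$, which together imply $pq\ge p\lambda\gg\log q$). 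Every other estimate is a routine variant of a calculation already carried out in the proof of Lemma~\ref{lem:3}.
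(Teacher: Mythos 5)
Your proposal matches the paper's proof essentially line for line: same random $(s+1)$-partition construction on the lines of $\mathcal{H}_2$, same identity $\sum_L|L\cap C|=q\alpha$ from $q$-regularity and bound $|\mathcal{L}_2|\le q^2$ for \ref{G2:a}, same three-stage Chernoff argument giving $\Theta(qp)$ non-isolated lines per vertex and $\Theta(p\lambda)$ local degree per line for \ref{G2:b}, and the same dangerous-set reduction with exponents $a^2-a$ and $(2r+1)b-4r^3$ followed by Markov with $\zeta=2$ for \ref{G2:c}. Your remark on where the hypothesis $\alpha\ge(10s\log s)q/p$ is used is exactly the paper's justification, so there is nothing to add.
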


\begin{proof}
Starting with the hypergraph $\mathcal{H}_2=(V_2,\mathcal{L}_2)$, we form the random graph $\mathbb{G}_2=(V_2,E)$ as follows. For every $L\in \mathcal{L}_2$, let $\chi_L : L \to  {[s+1]}$ be a random partition of the vertices of $L$ into $ {s+1}$ classes, where for every $v \in L$,
\[
\Pr(\chi(v) = i) = 
\begin{cases}
{p}/{s} & \text{for } 1\le i\le s,\\
1-p & \text{for } i=s+1,
\end{cases}
\]
and $\chi(v)$ is assigned independently from other vertices.
If $\chi_L(v) =s+1$, then we say that $v$ is {\em $L$-isolated}. Let $\{x,y\} \in E$ if $\{x,y\} \subseteq L$ for some $L \in \mathcal{L}_2$ and $\chi_L(x),  \chi_L(y)$ are distinct and neither $x$ nor $y$ is $L$-isolated. Thus for every $L \in \mathcal{L}_2$, $\mathbb{G}_2[L]$ consists of a set of isolated vertices (the $L$-isolated vertices) together with a complete $s$-partite graph with vertex partition $L = \chi^{-1}_L (1) \cup \chi^{-1}_L (2) \cup \dots \cup \chi^{-1}_L (s)$.

We will show that $\mathbb{G}_2$ satisfies \whp \ref{G2:a} and \ref{G2:b} and satisfies \ref{G2:c} with probability at least $1/2$. 

\medskip

\textbf{\ref{G2:a}:} First fix $C \in {V_2 \choose \alpha}$. We will bound the probability that $\mathbb{G}_2[C] \not \supseteq K_s$. For a fixed $L$, the probability that one of the classes $\chi_L^{-1}(1), \dots, \chi_L^{-1}(s)$ contains no element of $C$ is at most $s\of{1-\frac ps}^{|L \cap C|} \le s e^{-\frac {p}{s} |L \cap C|}$. Note that 
\[
\sum_{L \in \mathcal{L}_2} |L \cap C| = q |C| =   \alpha q,
\] 
since each point is in  $q$ lines. Now since $\chi_L$ and $\chi_{L'}$ are chosen independently for $L \neq L'$ we get, 
\begin{align*}
\Pr\Big( K_s \not \subseteq \mathbb{G}_2[C] \Big) &\le \prod_{L \in \mathcal{L}_2}\Pr \Big( K_s \not \subseteq \mathbb{G}_2[L \cap C] \Big)\\
& \le s^{q^2} \exp\left\{ -\frac ps \sum_{L \in \mathcal{L}_2} |L \cap C| \right\} \le \exp \left\{(\log s)  q^2  - \frac{1}{s} \alpha q p \right\}
\end{align*} 

So by the union bound, the probability that there exists a set $C$ of $\alpha$ vertices in $\mathbb{G}_2$ that contains no $K_s$ is at most
\[
q^{2 \alpha} \cdot \exp \left\{(\log s)  q^2  - \frac{1}{s} \alpha q p \right\} \le   \exp \left\{2\alpha  \log q +(\log s)  q^2  - \frac{1}{s} \alpha q p \right\}=o(1),
\]
since $p \gg (\log q) / \lambda$, $\lambda \le q$, and  $\alpha \ge (10 s \log s) q/p$. 

Thus, \whp $\alpha_s(\mathbb{G}_2) < \alpha$.

\medskip

\textbf{\ref{G2:b}:} Observe that for a fixed $L\in \mathcal{L}_2$, the number of non-$L$-isolated vertices $|\chi_L^{-1}([s])|$ is distributed as $\bin(|L|, p)$ which has expectation $p|L| $ so by the Chernoff bound with $\varepsilon = 1/2$ we get that
\[
\Pr\of{ \left| |\chi_L^{-1}([s])| -p|L| \right| > \frac 12 p|L|} \le 2\exp \left\{-\frac{p|L|}{12} \right\}\le 2\exp \left\{-\frac{p\lambda}{24} \right\}
\]
and so by the union bound, the probability that there exists some $L$ such that\linebreak 
$\left| |\chi_L^{-1}([s])| -p|L| \right| >  \frac 12 p |L|$ is at most 
\[
q^2 \cdot  2\exp \left\{-\frac{p \lambda}{24} \right\} = o(1),
\]
since $p\gg (\log q) / \lambda$.
Thus, \whp every line $L$ has at least $\frac 12 p|L| \ge \frac 14 p \lambda$ and at most $\frac 32 p|L| \le \frac 94 p \lambda$ many non-$L$-isolated vertices. 

Now for fixed $v$, let $X_v$ be the number of lines $L$ in which $v$ is non-$L$-isolated. $X_v$ is distributed as $\bin(q , p)$ which has expectation $qp $. Now by the Chernoff bound with $\varepsilon = 1/2$ we get 
\[
\Pr\of{|X_v - qp| > \frac 12 qp} \le 2 \exp \left\{-\frac{qp}{12} \right\}
\] 
and so by the union bound, the probability that there exists some point $v$ with $|X_v - qp| > \frac 12 qp$ is at most 
\[
q^2 \cdot 2 \exp \left\{-\frac{q p}{12} \right\} = o(1),
\]
since $ p \gg (\log q) / \lambda$ and $\lambda \le q$.
So \whp for every $v$, $v$ is non-$L$-isolated for some number of lines $L$ that is between  $\frac 12 qp $ and at most $\frac 32qp$.

Now assume for each $L$ and each $v \in L$ we have revealed whether $v$ is $L$-isolated, but we have not revealed $\chi_L(v)$ when $v$ is non-$L$-isolated. When we do reveal values $\chi_L(v)$ to form the graph $\mathbb{G}_2$, we have for each non-$L$-isolated vertex $v$ that
$\deg_{\mathbb{G}_2[L]}(v) \sim \bin(|\chi_L^{-1}([s])|-1, \frac{s-1}{s})$. Thus,
$\E(\deg_{\mathbb{G}_2[L]}(v)) = (|\chi_L^{-1}([s])|-1)(s-1)/s \ge (\frac 14 p\lambda-1)(s-1)/s$  and the Chernoff bound with $\varepsilon = 1/2$ tells us that 
\begin{align*}
\Pr\of{|\deg_{\mathbb{G}_2[L]}(v) - \E(\deg_{\mathbb{G}_2[L]}(v))| \ge \frac 12 \E(\deg_{\mathbb{G}_2[L]}(v))} &\le 2\exp\left\{- \frac{\E(\deg_{\mathbb{G}_2[L]}(v))}{12} \right\}\\ 
&\le 2\exp\left\{- \frac{(\frac 14 p\lambda-1)(s-1)/s}{12} \right\}
\end{align*}
so by the union bound, the probability that there is a vertex $v$ with $|\deg_{\mathbb{G}_2[L]}(v) - \E(\deg_{\mathbb{G}_2[L]}(v))| \ge \frac 12 \E(\deg_{\mathbb{G}_2[L]}(v))$ is at most 
\[
q^2 \cdot 2\exp\left\{- \frac{(\frac 14 p\lambda-1)(s-1)/s}{12} \right\} = o(1),
\]
since $p \gg (\log q) / \lambda$. Thus \whp for every vertex $v$ and line $L$ for which $v$ is non-$L$-isolated we have that $\deg_{\mathbb{G}_2[L]}(v)=\Theta(p \lambda)$. 

Thus, for each vertex $v$, \whp its degree in $\mathbb{G}_2$ is $\Theta( \lambda q p^2)$.

\medskip

\textbf{\ref{G2:c}:} Recall that $a$ is some positive integer and $b=\left \lceil(s+r)/ { \binom{a-1}{2} }\right \rceil +r$. 
First we show that every $K_{s+r}$ in $\mathbb{G}_2$ contains a dangerous subset in $\mathcal{D}_a \cup \mathcal{D}_b$.
Let $K$ be any copy of $K_{s+r}$ in $\mathbb{G}_2$. Similarly to the graph $\mathbb{G}_1$, we can conclude that the vertices of $K$ must contain a dangerous set in $\mathcal{D}_a$ or in $\mathcal{D}_b$.

In order for such a dangerous set to give a $K_{s+r}$ in $\mathbb{G}_2$  none of the vertices can be $L$-isolated in any of the lines in the dangerous set. For a Type 1 dangerous set, there are $\binom{a}{2}$ lines, each containing $2$ vertices, so the probability that no vertex $v$ is $L$-isolated for $L$ containing $v$ is $p^{2 \binom{a}{2}} = p^{a^2 - a}$. Thus, the expected number of copies of $K_{s+r}$ that arise from Type 1 dangerous sets  is at most $4\lambda^{a}q^a p^{a^2-a}.$ 
Now for dangerous sets of Type 2, the probability that a dangerous set in $\mathcal{D}_b$ gives rise to a $K_{s+r}$ is $p^x$, where $x$ is the number of point-line incidences there are within the dangerous set. As in Section~\ref{sec:G1} the number of point-line incidences is at least $  (2r+1)b - 4r^3$. Therefore the expected number of copies of $K_{s+r}$ arising from Type~2 dangerous sets is at most $4\lambda^b q^{r+2} p^{(2r+1)b - 4r^3}$.

Thus by linearity of expectation the total number of copies of $K_{s+r}$ has expectation at most 
\[
4\lambda^{a}q^a p^{a^2-a} + 4\lambda^b q^{r+2} p^{(2r+1)b - 4r^3}
\] 
and so we are done by the Markov bound applied with $\zeta=2$.

\end{proof}

\subsection{Deriving the lower bound for intermediate values of  $\delta$}

\begin{theorem}\label{thm:main2}
Let $r\ge 1$, $\varepsilon>0$, and $\frac 12 + \frac{1}{2(2r+1)} < \delta < 1$. Then for all sufficiently large $s$, we have 
\[
\RT_s(n, K_{s+r}, n^\delta) = \Omega\of{n^{2 - (1-\delta)/r  - \varepsilon}}.
\]
\end{theorem}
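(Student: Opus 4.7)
The plan is to mirror the proof of Theorem~\ref{thm:main1}, but invoke Lemma~\ref{lem:5} (the denser hypergraph $\mathcal{H}_2$ construction) in place of Lemma~\ref{lem:3}. Recall that in Section~\ref{sec:main1} the constraint $\lambda\le q$ saturated precisely at $\delta=\tfrac{1}{2}+\tfrac{1}{2(2r+1)}$; for larger $\delta$ no sparser affine-plane variant suffices, so we turn to $\mathcal{H}_2$, whose flexible vertex count $\lambda q(1+o(1))$ lets us keep $\lambda$ strictly below $q$ while still achieving the density we need. Otherwise the high-level structure---pick parameters, apply the lemma, check that the construction is $K_{s+r}$-free, then pass to an induced subgraph on exactly $n$ vertices---is unchanged.

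For the parameters, fix a small $\varepsilon'>0$ (say $\varepsilon'=\varepsilon/3$) and set $\mu=(1-\delta)\of{1+\tfrac{1}{2r}}+\varepsilon'$. Because $\delta>\tfrac{r+1}{2r+1}=\tfrac{1}{2}+\tfrac{1}{2(2r+1)}$, one checks that $\mu<\tfrac{1}{2}$ provided $\varepsilon'$ is small enough. Using the same prime-gap fact as in Theorem~\ref{thm:main1}, pick a prime $q$ with $q=n^{1-\mu}(1+o(1))$ and set $\lambda=n/q$, so that $\lambda q=n$, $\lambda\le q$, and $\lambda\gg\log q$. Define $p=(20s\log s)\,q/n^{\delta}=(20s\log s)\,n^{1-\mu-\delta}$; since $\mu>1-\delta$ we get $p=o(1)$ and $p\lambda=\Theta(n^{1-\delta})\gg\log q$, and $\alpha:=n^{\delta}\ge(10s\log s)\,q/p$. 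Finally let $a$ be a constant large in terms of $r$ and $\varepsilon$---specifically $(a-1)\varepsilon'>1$---and $b=\lceil(s+r)/\binom{a-1}{2}\rceil+r$. All hypotheses of Lemma~\ref{lem:5} are then satisfied.

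Applying Lemma~\ref{lem:5} yields a graph $G_2$ on $n(1+o(1))$ vertices with $\alpha_s(G_2)<n^{\delta}$, every degree $\Theta(\lambda q p^{2})$, and $K_{s+r}$-count controlled by \ref{G2:c}. Writing each term as a power of $n$, the Type~1 term becomes $n^{a+(a^2-a)(-(1-\delta)/(2r)-\varepsilon')}$ up to constants, whose exponent is negative by the choice of $a$, so this term is $o(1)$. For the Type~2 term, the coefficient of $b$ in the corresponding exponent of $n$ simplifies to $-2r\mu+(2r+1)(1-\delta)=-2r\varepsilon'$, so the term is $n^{-2r\varepsilon' b+O(r^3)}=o(1)$ provided $s$ (hence $b$) is large enough in terms of $r$ and $\varepsilon$. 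Thus $G_2$ is $K_{s+r}$-free, and its edge count equals $\Theta(\lambda^{2}q^{2}p^{2})=\Theta(n^{4-2\mu-2\delta})=\Theta(n^{2-(1-\delta)/r-2\varepsilon'})$, which exceeds $n^{2-(1-\delta)/r-\varepsilon}$ for $\varepsilon'<\varepsilon/2$. Passing to an induced subgraph on exactly $n$ vertices loses only $o(n)\cdot\Delta(G_2)$ edges, which is negligible, and delivers the claimed lower bound.

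The main technical obstacle is the simultaneous bookkeeping of parameters: $\mu$ must sit right at the threshold $(1-\delta)\of{1+\tfrac{1}{2r}}$ (plus a cushion $\varepsilon'$) both to keep $\lambda\le q$ and to hit the target edge exponent; $a$ must be large in terms of $\varepsilon$ to suppress Type~1 dangerous sets; and $b$, hence $s$, must be large in terms of $r$ and $\varepsilon$ to suppress Type~2 ones. The pivotal identity is that the coefficient of $b$ in the Type~2 log-exponent, $-2r\mu+(2r+1)(1-\delta)$, vanishes exactly at $\mu=(1-\delta)\of{1+\tfrac{1}{2r}}$; the overshoot by $\varepsilon'$ then provides the precise slack needed to push the $K_{s+r}$-count below $1$ while costing only $2\varepsilon'$ in the edge exponent.
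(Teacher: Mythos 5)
Your proposal is correct and follows the same route as the paper: both invoke Lemma~\ref{lem:5} to produce the graph $G_2$ and verify $K_{s+r}$-freeness by making the Type~1 and Type~2 dangerous-set counts $o(1)$. The difference is purely in the bookkeeping: where the paper sets $\lambda$ equal to $q$ raised to an exact ratio of linear functions of $b$ and then uses Proposition~\ref{estlem} to estimate that ratio, you fix $\mu = (1-\delta)\of{1+\tfrac{1}{2r}} + \varepsilon'$ once and for all (which is precisely the $b\to\infty$ limit of the paper's exponent of $\lambda$ in $n$, shifted by the slack $\varepsilon'$) and absorb the finite-$b$ error into the negative coefficient $-2r\varepsilon'$ of $b$, which is both cleaner and more transparent about where the $\varepsilon$-loss comes from.
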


First we state a proposition that we will use to estimate fractions that have ``error" in the numerator and denominator. 

\begin{proposition} \label{estlem}

For any real numbers $x, y, \epsilon_x, \epsilon_y$, if $x,y \neq 0$ and $|\frac{\epsilon_x}{x}|, |\frac{\epsilon_y}{y}| 
\le \frac{1}{2}$, then 
\[
\left|\frac{x+ \epsilon_x}{y+\epsilon_y} - \frac{x}{y} \right| \le \frac{|\epsilon_x y| + 3|\epsilon_y x|}{y^2}. 
\]
\end {proposition}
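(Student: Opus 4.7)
The statement is a purely algebraic error-propagation bound, so the plan is to combine one clean algebraic identity with two elementary size estimates coming directly from the hypothesis $|\epsilon_x/x|, |\epsilon_y/y| \le 1/2$.

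My first step would be to decompose the difference asymmetrically in $\epsilon_x$ and $\epsilon_y$, mirroring the asymmetry (coefficients $1$ and $3$) on the right-hand side of the claimed bound. Specifically, I would write
\[
\frac{x+\epsilon_x}{y+\epsilon_y} - \frac{x}{y}
\;=\; \frac{\epsilon_x}{y} \;+\; (x+\epsilon_x)\left(\frac{1}{y+\epsilon_y} - \frac{1}{y}\right)
\;=\; \frac{\epsilon_x}{y} \;-\; \frac{(x+\epsilon_x)\,\epsilon_y}{y\,(y+\epsilon_y)},
\]
which is checked by putting things over a common denominator. Notice that the $\epsilon_x$ term is already in the desired form $\epsilon_x/y = \epsilon_x y / y^2$, so the factor $1$ in front of $|\epsilon_x y|$ will come out automatically; the factor $3$ must then come out of the second summand.

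The next step is the triangle inequality followed by two size estimates. From $|\epsilon_y/y|\le 1/2$ I get $|y+\epsilon_y|\ge |y|/2$, hence $|y(y+\epsilon_y)| \ge y^2/2$. From $|\epsilon_x/x|\le 1/2$ I get $|\epsilon_x|\le |x|/2$, hence $|x+\epsilon_x|\le 3|x|/2$. Plugging these into the identity above yields
\[
\left|\frac{x+\epsilon_x}{y+\epsilon_y} - \frac{x}{y}\right|
\;\le\; \frac{|\epsilon_x|}{|y|} + \frac{(3|x|/2)\,|\epsilon_y|}{y^2/2}
\;=\; \frac{|\epsilon_x y|}{y^2} + \frac{3|\epsilon_y x|}{y^2},
\]
which is exactly the claimed inequality.

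There is essentially no obstacle; the only subtlety is choosing the right decomposition. The symmetric form $\frac{y\epsilon_x - x\epsilon_y}{y(y+\epsilon_y)}$ bounded via the triangle inequality gives coefficients $2$ and $2$ on the right, which is strictly weaker than what is needed. The asymmetric decomposition above keeps the first term exact (so its coefficient stays at $1$) and pays only in the second term, where the factor $3$ arises as $\tfrac{3/2}{1/2}$, i.e.\ the product of the upper bound on $|x+\epsilon_x|/|x|$ and the reciprocal of the lower bound on $|y+\epsilon_y|/|y|$. Once this decomposition is chosen, the rest is a two-line application of the triangle inequality and the hypothesis.
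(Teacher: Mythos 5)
Your proof is correct, and it uses the same underlying decomposition as the paper, namely writing the difference as $\frac{\epsilon_x}{y}$ plus a second term that carries the factor $3$. The paper arrives at this by writing $\frac{x+\epsilon_x}{y+\epsilon_y}=\frac{x}{y}\bigl(1+\tfrac{\epsilon_x}{x}\bigr)\cdot\frac{1}{1+\epsilon_y/y}$ and then expanding $\frac{1}{1+\epsilon_y/y}$ as a geometric series, bounding $\sum_{n\ge1}|\epsilon_y/y|^n\le 2|\epsilon_y/y|$ using $|\epsilon_y/y|\le\tfrac12$; your version instead derives the identity $\frac{x+\epsilon_x}{y+\epsilon_y}-\frac{x}{y}=\frac{\epsilon_x}{y}-\frac{(x+\epsilon_x)\epsilon_y}{y(y+\epsilon_y)}$ directly and bounds the denominator via $|y+\epsilon_y|\ge|y|/2$. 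These are two ways of producing the same factor of $2$, and the factor $3/2$ from $|x+\epsilon_x|\le\tfrac32|x|$ is identical in both. Your route is a bit more elementary since it avoids infinite series entirely, while the paper's series expansion makes it slightly more transparent that the estimate improves if one assumes tighter control on $|\epsilon_y/y|$. Both are short, correct, and interchangeable.
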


\begin{proof}
Observe that 
\begin{align*}
\left|\frac{x+ \epsilon_x}{y+\epsilon_y} - \frac{x}{y}\right| &= \left|\frac{x}{y} \left[ \left(1+\frac{\epsilon_x}{x} \right) \cdot \frac{1}{1+\frac{\epsilon_y}{y}} -1 \right] \right|\\
&= \left|\frac{x}{y} \left[ \left(1+\frac{\epsilon_x}{x} \right) \cdot \left(1 + \sum_{n=1}^{\infty} \of{-\frac{\epsilon_y}{y}}^n \right) -1 \right]\right|\\
&= \left|\frac{x}{y} \left[ \frac{\epsilon_x}{x}  +\of{1+\frac{\epsilon_x}{x}}\sum_{n=1}^{\infty} \of{-\frac{\epsilon_y}{y}}^n \right]\right|\\
&\le\left| \frac{ \epsilon_x }{y}\right| + \left|  \frac{x}{y} \right| \cdot \left|1+\frac{\epsilon_x}{x}\right| \cdot \sum_{n=1}^{\infty} \left|\frac{\epsilon_y}{y}\right|^n\\
&\le\left| \frac{ \epsilon_x }{y}\right| + \left|  \frac{x}{y} \right| \cdot \frac{3}{2} \cdot 2 \left|\frac{\epsilon_y}{y}\right|.
\end{align*}
In the last line we have used $|\epsilon_x / x| \le 1/2$ and
\[
\sum_{n=1}^{\infty} \left|\frac{\epsilon_y}{y}\right|^n = \left|\frac{\epsilon_y}{y}\right| \sum_{n=0}^{\infty} \left|\frac{\epsilon_y}{y}\right|^n \le \left|\frac{\epsilon_y}{y}\right| \cdot 2
\] 
which follows from $|\epsilon_y / y| \le 1/2$.
\end{proof}

\begin{proof}[Proof of Theorem~\ref{thm:main2}]
We will apply Lemma~\ref{lem:5} after discussing how to set parameters.

Fix $r\ge 1$, $\varepsilon>0$ and $\frac 12 + \frac{1}{2(2r+1)} < \delta < 1$. Set 
\[
a = 2 + \max\left\{ \left\lceil \frac{1}{\delta}  \right\rceil , \;\;\left\lceil  \of{\frac{(1-\delta)(2r+1)}{\delta(2r+1)-1} + 1} \cdot \frac{\delta(2r+1)-1}{1-\delta} \right\rceil \right\}
\]
and
\[
b=\left \lceil(s+r) / \binom{a-1}{2} \right \rceil +r.
\] 
We will assume that $s$, and consequently $b$, is large enough which satisfies for instance the following
\begin{equation}\label{thm:main2:eps}
\varepsilon > 104r^2/b.
\end{equation}

Similarly as in the proof of Theorem~\ref{thm:main1} for large $n$ there is a prime number $q$ such that 
\[
n^{1/\of{\frac{(1-\delta)(2r+1)b - 4(1-\delta)r^3 +r+3}{\left(\delta(2r+1)-1\right)b - 4\delta r^3}+1}} \le q
\le (1+o(1)) n^{1/\of{\frac{(1-\delta)(2r+1)b - 4(1-\delta)r^3 +r+3}{\left(\delta(2r+1)-1\right)b - 4\delta r^3}+1}}.
\]
(It will be shown soon that the exponents are positive.)
Let 
\[
\lambda = q^\frac{(1-\delta)(2r+1)b - 4(1-\delta)r^3 +r+3}{\left(\delta(2r+1)-1\right)b - 4\delta r^3} \quad \text{ and }\quad 
p = \kappa q^{1 - \delta \of{\frac{(1-\delta)(2r+1)b - 4(1-\delta)r^3 +r+3}{\left(\delta(2r+1)-1\right)b - 4\delta r^3}+1}},
\]
where $\kappa = 20 s \log s$. Finally set $\alpha = n^\delta $. 

We will show that the above parameters satisfy all assumptions of Lemma~\ref{lem:5} implying the existence of a graph $G_2$ of order $\lambda q (1+o(1))$ satisfying \ref{G2:a}-\ref{G2:c}.

First observe that 
\[
\frac{(10s\log s) q}{p} = \frac{q^{\delta\of{\frac{(1-\delta)(2r+1)b - 4(1-\delta)r^3 +r+3}{\left(\delta(2r+1)-1\right)b - 4\delta r^3}+1}}}{2}
=\frac{(1+o(1))n^{\delta}}{2} \le \alpha.
\]
yielding by~\ref{G2:a} that the $s$-independence number of $G_2$ is less than $\alpha$. 

Now we examine the exponent of~$\lambda$. Clearly, 
\[
\lim_{b\to\infty} \frac{(1-\delta)(2r+1)b - 4(1-\delta)r^3 +r+3}{\left(\delta(2r+1)-1\right)b - 4\delta r^3} = \frac{(1-\delta)(2r+1) }{\delta(2r+1)-1}.
\]
We will show that for $b \ge 20r^2$ the exponent is very close to the above limit. Indeed, since $r \ge 1$ and $1/2  < \delta < 1$, we have 
\[
\left|\frac{  -4(1-\delta)r^3 +r+3}{(1-\delta)(2r+1)b} \right| \le \frac{8r^3}{rb} \le \frac 12 \quad \text{ and }  \quad  \left|\frac{-4\delta r^3}{(\delta(2r+1)-1)b} \right| \le \frac{8r^3}{rb} \le \frac 12
\]
and so by Proposition~\ref{estlem} we obtain
\begin{align*}
&\left|\frac{(1-\delta)(2r+1)b - 4(1-\delta)r^3 +r+3}{(\delta(2r+1)-1)b - 4\delta r^3} - \frac{(1-\delta)(2r+1) }{\delta(2r+1)-1} \right| \\ 
&\qquad\qquad\qquad\le \frac{\of{4(1-\delta)r^3 +r+3}(\delta(2r+1)-1)b + 3\cdot 4\delta r^3(1-\delta)(2r+1)b  }{\of{(\delta(2r+1)-1)b}^2}\\
&\qquad\qquad\qquad\le \frac{\of{8r^3}(2r)b + 12r^3(3r)b  }{\of{rb}^2} = \frac{52r^2}{b},
\end{align*}
where the last inequality follows from $r \ge 1$ and $1/2 < \delta < 1$.  Also note that  
\[
\frac{(1-\delta)(2r+1) }{\delta(2r+1)-1} \ge 1-\delta
\] 
and
\[
\frac{(1-\delta)(2r+1) }{\delta(2r+1)-1} = 1- \frac{(2\delta-1)(2r+1)-1}{\delta(2r+1)-1} \le  1- \frac{(2\delta-1)(2r+1)-1}{2r}.
\]
Observe that since $\delta > \frac 12 + \frac{1}{2(2r+1)}$, we get $\frac{(2\delta-1)(2r+1)-1}{2r} > 0$. Thus, if 
\[
\frac{52r^2}{b} < \min\left\{1-\delta, \frac{(2\delta-1)(2r+1)-1}{2r} \right\},
\] 
then
\begin{equation}\label{eq:main2:1}
\frac{(1-\delta)(2r+1)b - 4(1-\delta)r^3 +r+3}{(\delta(2r+1)-1)b - 4\delta r^3} \le \frac{(1-\delta)(2r+1) }{\delta(2r+1)-1} + \frac{52r^2}{b} < 1
\end{equation}
and
\begin{equation}\label{eq:main2:2}
\frac{(1-\delta)(2r+1)b - 4(1-\delta)r^3 +r+3}{(\delta(2r+1)-1)b - 4\delta r^3} \ge \frac{(1-\delta)(2r+1) }{\delta(2r+1)-1} - \frac{52r^2}{b} > 0.
\end{equation}
Consequently, $\lambda$ is at least $q^{\Omega(1)}$ and less than $q$, so $\log q \ll \lambda \le q$, as required in Lemma~\ref{lem:5}.

Furthermore, 
\[
\lambda p = \kappa q^{1+(1-\delta)\of{ \frac{(1-\delta)(2r+1)b - 4(1-\delta)r^3 +r+3}{\left(\delta(2r+1)-1\right)b - 4\delta r^3} } - \delta}
\ge \kappa q^{1-\delta} \gg \log q
\]
and
\begin{align*}
p &= \kappa q^{1 - \delta \of{\frac{(1-\delta)(2r+1)b - 4(1-\delta)r^3 +r+3}{\left(\delta(2r+1)-1\right)b - 4\delta r^3}+1}}
\overset{\eqref{eq:main2:2}}{\le} \kappa q^{ 1-\delta\of{ \frac{(1-\delta)(2r+1) }{\delta(2r+1)-1} - \frac{52r^2}{b} +1  }  } \\
&= \kappa q^{ 1-\delta\of{ \frac{1-\delta}{\delta} + \frac{1-\delta}{\delta\of{\delta(2r+1)-1}} - \frac{52r^2}{b} +1  }  } 
= \kappa q^{-\frac{1-\delta}{\delta(2r+1)-1} + \frac{52r^2\delta}{b}} = o(1)
\end{align*}
for $b$ sufficiently large. Consequently, $1 \ge p \gg (\log q) / \lambda$ and all assumptions of Lemma~\ref{lem:5} are satisfied.


Now we will see that our choice of parameters makes $G_2$ a $K_{s+r}$-free graph. By~\ref{G2:c}, the number of copies of $K_{s+r}$ is at most 
\begin{equation} \label{nums+r2}
8\left(\lambda^a q^a p^{a^2 -a} +  \lambda^b q^{r+2} p^{(2r+1)b - 4r^3}\right).
\end{equation}
The first term is on the order of $\lambda^a q^a p^{a^2 -a} = O\of{\left(\lambda q p^{a-1}\right)^a}$.
We show that $\lambda q p^{a-1} = o(1)$. The order of magnitude of the latter is
\begin{align*}
q^\frac{(1-\delta)(2r+1)b - 4(1-\delta)r^3 +r+3}{\left(\delta(2r+1)-1\right)b - 4\delta r^3} &\cdot q \cdot q^{\of{1 - \delta \of{\frac{(1-\delta)(2r+1)b - 4(1-\delta)r^3 +r+3}{\left(\delta(2r+1)-1\right)b - 4\delta r^3}+1}}(a-1)}\\
&=q^{\of{\frac{(1-\delta)(2r+1)b - 4(1-\delta)r^3 +r+3}{\left(\delta(2r+1)-1\right)b - 4\delta r^3} }\cdot (1-\delta(a-1)) + 1+(1-\delta)(a-1)}  \\
&\overset{\eqref{eq:main2:1}}{\le} q^{\of{ \frac{(1-\delta)(2r+1)}{\delta(2r+1)-1} + \frac{52r^2}{b} }\cdot (1-\delta(a-1)) + 1+(1-\delta)(a-1)}  \\
&=q^{ \frac{(1-\delta)(2r+1)}{\delta(2r+1)-1} + \frac{52r^2}{b}(1- \delta(a-1))  + 1 + \of{1 - \delta - \frac{(1-\delta)(2r+1)}{\delta(2r+1)-1} \delta }(a-1)} \\
&\le q^{ \frac{(1-\delta)(2r+1)}{\delta(2r+1)-1}   + 1 + \of{1 - \frac{(2r+1)\delta}{\delta(2r+1)-1} }(1-\delta)(a-1)} \\
&= q^{ \frac{(1-\delta)(2r+1)}{\delta(2r+1)-1} + 1 -  \frac{1}{\delta(2r+1)-1} (1-\delta)(a-1)} = o(1),
\end{align*}
where the second to last line follows from $a > 1+ 1/\delta$ and the last line follows from
\[
a > 1 + \of{\frac{(1-\delta)(2r+1)}{\delta(2r+1)-1} + 1} \cdot \frac{\delta(2r+1)-1}{1-\delta}.
\] 
Thus, $\lambda^a q^a p^{a^2 -a}=o(1)$.
Now we bound the order of the magnitude of the second term in~\eqref{nums+r2}. Observe that 
\begin{align*}
q^{\frac{(1-\delta)(2r+1)b - 4(1-\delta)r^3 +r+3}{\left(\delta(2r+1)-1\right)b - 4\delta r^3} b} &\cdot q^{r+2} \cdot q^{\of{1 - \delta \of{\frac{(1-\delta)(2r+1)b - 4(1-\delta)r^3 +r+3}{\left(\delta(2r+1)-1\right)b - 4\delta r^3}+1}}\of{(2r+1)b - 4r^3}} = \frac{1}{q} = o(1).
\end{align*}
Thus, $G_2$ is $K_{s+r}$-free.

Now let $G$ be any induced subgraph of $G_2$ of order $n$. Clearly, $G$ is $K_{s+r}$-free with $\alpha_s(G) < n^{\delta}$. Furthermore, since $n = (1+o(1))\lambda q$,
\begin{align*}
|E(G)| &\ge |E(G_2)| - |V(G_2) - V(G)| \cdot \Delta(G_2)\\ 
& \ge |V(G_2)|\cdot \delta(G_2) - o(1) \cdot \Delta(G_2)  = \Omega\of{ \lambda q \cdot \lambda q p^2} = \Omega(np^2),
\end{align*}
by~\ref{G2:b}. Since 
\[
p = (1+o(1))\kappa n^{1/\of{\frac{(1-\delta)(2r+1)b - 4(1-\delta)r^3 +r+3}{\left(\delta(2r+1)-1\right)b - 4\delta r^3}+1}-\delta}
\overset{\eqref{eq:main2:1}}{\ge} (1+o(1))\kappa n^{1/\of{\frac{(1-\delta)(2r+1) }{\delta(2r+1)-1} + \frac{52r^2}{b}+1}-\delta},
\]
we get
\[
\Omega\of{n^2 p^2} 
= \Omega \of{n^{2-2\delta + 2/\of{1+ \frac{(1-\delta)(2r+1) }{\delta(2r+1)-1} + \frac{52r^2}{b}}} }
= \Omega \of{n^{2-2\delta + 2/\of{\frac{2r }{\delta(2r+1)-1} + \frac{52r^2}{b}}} }.
\]
Since for any positive real numbers $x, y, z$ with $y > z$,
\[
\frac{x}{y+z} = \frac{x}{y} \cdot \frac{1}{1+\frac{z}{y}} \ge \frac xy \cdot \of{1- \frac zy} = \frac{x}{y} - \frac{xz}{y^2},
\]
we get
\[
2\left/\right.\!\of{\frac{2r }{\delta(2r+1)-1} + \frac{52r^2}{b}} \ge \frac{\delta(2r+1)-1}{r} - \frac{104r^2}{b} \cdot \of{\frac{\delta(2r+1)-1}{2r}}^2
\ge \frac{\delta(2r+1)-1}{r} - \varepsilon,
\]
the latter is due to~\eqref{thm:main2:eps}.
Thus,
\[
\Omega\of{n^2 p^2} = \Omega\of{ n^{2-2\delta + \frac{\delta(2r+1)-1}{r} - \varepsilon} } = \Omega\of{n^{2-\frac{1-\delta}{r}-\varepsilon}}
\]
completing the proof.
\end{proof}


\section{Upper bound on $\RT_s(n, K_{s+r}, n^\delta)$}\label{sec:ub}

First we state the well-known dependent random choice lemma from a survey paper of Fox and Sudakov~\cite{FS}. Early versions of this lemma were proved and applied by various researchers,
starting with Gowers~\cite{Gowers}, R\"odl and Kostochka~\cite{KRodl}, and Sudakov~\cite{SU3,SU,SU2}.

\begin{lemma}\label{lem:dependent}
Let $a, d, m, n, r$ be positive integers. Let $G = (V,E)$ be a graph with $|V| = n$ vertices and
average degree $d = 2|E(G)|/n$. If there is a positive integer $t$ such that
\[
\frac{d^t}{n^{t-1}} - n^r \left( \frac{m}{n} \right)^{t} \ge a,
\]
then $G$ contains a subset $U$ of at least $a$ vertices such that every $r$ vertices in $U$ have at least $m$
common neighbors.
\end{lemma}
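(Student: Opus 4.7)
\medskip

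\noindent\textbf{Proof proposal for Lemma~\ref{lem:dependent}.}

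The plan is to use the standard dependent random choice argument. First I would sample a multiset $T = \{v_1, \ldots, v_t\}$ of $t$ vertices chosen uniformly and independently from $V$ (with replacement), and let $A = \bigcap_{i=1}^t N(v_i)$ be their common neighborhood. The key observation is that for any vertex $u$, the probability that $u \in A$ equals $(\deg(u)/n)^t$, because $u$ is included iff each of the $t$ independent samples lands in $N(u)$. Summing over $u$ and applying the convexity inequality $\sum_u \deg(u)^t \ge n \cdot (d)^t$ (which follows from Jensen applied to the convex function $x \mapsto x^t$, using that the average degree is $d$), I obtain
\[
\E(|A|) = \sum_{u \in V} \left( \frac{\deg(u)}{n} \right)^t \ge n \cdot \left( \frac{d}{n} \right)^t = \frac{d^t}{n^{t-1}}.
\]

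Next I would control the number of ``bad'' $r$-subsets inside $A$, where an $r$-subset $S \subseteq V$ is called bad if its common neighborhood $\bigcap_{v \in S} N(v)$ has size less than $m$. For a fixed bad $r$-subset $S$, the probability $S \subseteq A$ is at most $(m/n)^t$, because $S \subseteq A$ forces every $v_i$ to lie in the common neighborhood of $S$, which has fewer than $m$ vertices. Letting $X$ be the number of bad $r$-subsets contained in $A$, linearity of expectation gives
\[
\E(X) \le \binom{n}{r} \left( \frac{m}{n} \right)^t \le n^r \left( \frac{m}{n} \right)^t.
\]

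Combining, $\E(|A| - X) \ge d^t/n^{t-1} - n^r (m/n)^t \ge a$, so there exists some outcome of $T$ for which $|A| - X \ge a$. Fix such a $T$, and form $U$ by deleting one vertex from each bad $r$-subset contained in $A$; what remains has at least $a$ vertices and contains no bad $r$-subset, so every $r$-tuple in $U$ has at least $m$ common neighbors, as required.

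The argument is essentially routine once one sets up the two expectations correctly; I do not anticipate any real obstacle. The only mild subtlety is the use of sampling with replacement (rather than without), which is what makes the events ``$v_i \in N(u)$'' independent across $i$ and gives the clean factorization $(\deg(u)/n)^t$; this is a standard trick so I would not dwell on it.
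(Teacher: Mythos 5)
Your proof is correct and is exactly the standard dependent random choice argument from the Fox--Sudakov survey. Note that the paper does not prove this lemma itself: it is stated as Lemma~\ref{lem:dependent} and cited directly from~\cite{FS}, so there is no ``paper's own proof'' to compare against; your proof reproduces the standard argument from the cited source (sample $t$ vertices with replacement, lower bound $\E|A|$ via Jensen, upper bound the expected number of bad $r$-subsets inside $A$, take a difference, and prune one vertex per bad subset), and all the steps check out.
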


\begin{corollary}\label{cor:sudakov}
Let $r\ge 1$ be an integer and $0<\delta<1$.
Let $G$ be a graph on $n$ vertices with at least $n^{2-(1-\delta)/\lceil \frac{r-\delta}{1-\delta} \rceil}$ edges. 
Then $G$ contains a subset $U$ of at least $n^{\delta}$ vertices such that every $r$ vertices in $U$ have at least $n^{\delta}$
common neighbors.
\end{corollary}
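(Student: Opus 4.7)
The plan is to apply Lemma~\ref{lem:dependent} with a suitable choice of the free parameter $t$. Set $t = \lceil \frac{r-\delta}{1-\delta} \rceil$ and set both $m = n^\delta$ and $a = n^\delta$. Then the conclusion of Lemma~\ref{lem:dependent} is exactly the conclusion of Corollary~\ref{cor:sudakov}, so all that remains is to verify the hypothesis
\[
\frac{d^t}{n^{t-1}} - n^r \left(\frac{m}{n}\right)^t \ge a,
\]
where $d = 2|E(G)|/n$ is the average degree of $G$.

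First, I would bound the negative term. By choice of $t$ we have $(1-\delta)t \ge r - \delta$, hence
\[
n^r \left(\frac{m}{n}\right)^t = n^{r - (1-\delta)t} \le n^{\delta}.
\]
Next, I would bound the positive term. The edge-count hypothesis gives
\[
d \;\ge\; 2\,n^{1 - (1-\delta)/t},
\]
and raising to the $t$-th power and dividing by $n^{t-1}$ yields
\[
\frac{d^t}{n^{t-1}} \;\ge\; \frac{2^{t}\,n^{t - (1-\delta)}}{n^{t-1}} \;=\; 2^{t}\, n^{\delta}.
\]
Subtracting, we obtain $\frac{d^t}{n^{t-1}} - n^r (m/n)^t \ge (2^t - 1) n^\delta \ge n^\delta = a$, which verifies the hypothesis of Lemma~\ref{lem:dependent} and completes the argument.

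There is no real obstacle here: the corollary is purely a parameter-tuning exercise of the dependent random choice lemma. The only point requiring care is the choice $t = \lceil \frac{r-\delta}{1-\delta} \rceil$, which is the smallest integer making the exponent $r-(1-\delta)t$ at most $\delta$; any smaller $t$ would let the subtracted term dominate and destroy the bound, while a larger $t$ would unnecessarily strengthen the required edge density. This explains why the exponent in the hypothesis takes the ceiling form $2 - (1-\delta)/\lceil (r-\delta)/(1-\delta)\rceil$.
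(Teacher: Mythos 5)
Your proof is correct and takes essentially the same approach as the paper: the same choice of parameters $t=\lceil\frac{r-\delta}{1-\delta}\rceil$, $a=m=n^\delta$, the same bound $n^r(m/n)^t\le n^\delta$, and the same conclusion $(2^t-1)n^\delta\ge n^\delta$. The only cosmetic difference is that you carry the inequality $d\ge 2n^{1-(1-\delta)/t}$ explicitly rather than substituting the borderline value of $d$, which if anything is slightly cleaner.
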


\begin{proof}
Let $t=\lceil \frac{r-\delta}{1-\delta} \rceil$, $a=m=n^{\delta}$, and $d=2n^{1-(1-\delta)/\lceil \frac{r-\delta}{1-\delta} \rceil}$. Then,
\[
\frac{d^t}{n^{t-1}} - {n^r} \left( \frac{m}{n} \right)^{t} 
= 2^{\lceil \frac{r-\delta}{1-\delta} \rceil} n^{\delta}  - n^r n^{-(1-\delta)\cdot \lceil \frac{r-\delta}{1-\delta} \rceil}
\ge 2^{\lceil \frac{r-\delta}{1-\delta} \rceil} n^{\delta} - n^{\delta}
\ge a.
\]
Now the corollary follows from the above lemma.
\end{proof}

The next theorem is an easy generalization of a result of Sudakov from~\cite{SU}.
\begin{theorem}\label{thm:ub_delta}
Let $s \ge r\ge 1$ and $0<\delta<1$. Then,
\[
\RT_s(n,K_{s+r}, n^{\delta}) < 
\begin{cases}
n^{2-\frac{(1-\delta)^2}{r-\delta}}, & \text{ if $\frac{r-\delta}{1-\delta}$ is an integer},\\ 
n^{2-\frac{(1-\delta)^2}{r+1-2\delta}}, & \text{ otherwise}. 
\end{cases}
\]
\end{theorem}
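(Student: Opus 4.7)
The plan is to deduce the theorem directly from Corollary~\ref{cor:sudakov} by a short combinatorial argument. First I would verify that the edge bound in the statement has been chosen precisely to match the hypothesis of the corollary. Set $t := \lceil (r-\delta)/(1-\delta) \rceil$. When $(r-\delta)/(1-\delta)$ is already an integer, $t$ equals it exactly and the exponent $2 - (1-\delta)/t$ coincides with $2 - (1-\delta)^2/(r-\delta)$; when it is not an integer, $t \le (r-\delta)/(1-\delta) + 1$ gives $(1-\delta)/t \ge (1-\delta)^2/(r+1-2\delta)$, so the theorem's second bound is at least the one produced by the corollary. Consequently, any $G$ whose edge count meets the forbidden bound has $|E(G)| \ge n^{2-(1-\delta)/t}$, and Corollary~\ref{cor:sudakov} applies.

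Next I would argue that such a $G$ cannot simultaneously be $K_{s+r}$-free with $\alpha_s(G) < n^\delta$. The corollary produces a set $U \subseteq V(G)$ with $|U| \ge n^\delta$ in which every $r$ vertices have at least $n^\delta$ common neighbors. Since $|U| \ge n^\delta > \alpha_s(G)$, the induced subgraph $G[U]$ is not $K_s$-free and therefore contains a copy of $K_s$; using $s \ge r$, I can select from it a clique $T \subseteq U$ with $|T| = r$. The common neighborhood $N(T) := \bigcap_{v \in T} N_G(v)$ then has at least $n^\delta$ vertices, and $\alpha_s(G[N(T)]) \le \alpha_s(G) < n^\delta \le |N(T)|$ forces $G[N(T)]$ to contain a copy $S$ of $K_s$. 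Since $T$ is disjoint from $N(T)$ (vertices are not their own neighbors) and every vertex of $S$ is adjacent to every vertex of $T$, the set $T \cup S$ spans a copy of $K_{s+r}$ in $G$, contradicting the assumed $K_{s+r}$-freeness.

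The only genuine work is the arithmetic calibrating $2 - (1-\delta)^2/(r-\delta)$ and $2 - (1-\delta)^2/(r+1-2\delta)$ to the ceiling appearing in Corollary~\ref{cor:sudakov}, and this is routine. The hypothesis $\alpha_s(G) < n^\delta$ is then used \emph{twice}: once inside $U$ to locate the $r$-clique $T$, and once inside the common neighborhood of $T$ to locate the $s$-clique $S$ that completes the forbidden $K_{s+r}$. The constraint $s \ge r$ enters exactly to guarantee that the $K_s$ found inside $U$ already contains a $K_r$ to serve as the base for the common-neighborhood step; no further ingredients are needed.
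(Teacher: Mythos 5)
Your proposal is correct and follows essentially the same route as the paper: the same arithmetic calibration of the exponent against $t = \lceil (r-\delta)/(1-\delta)\rceil$, the same invocation of Corollary~\ref{cor:sudakov}, and the same two-stage use of the $s$-independence bound (first inside $U$ to find a $K_r$, then inside the common neighborhood to find a $K_s$). The only cosmetic difference is that you phrase the conclusion as a contradiction with $K_{s+r}$-freeness whereas the paper directly exhibits an $s$-independent set of size $n^{\delta}$; these are the same argument.
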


\begin{proof}
Clearly if $\frac{r-\delta}{1-\delta}$ is an integer, then $n^{2-\frac{(1-\delta)^2}{r-\delta}} = n^{2-(1-\delta)/\lceil \frac{r-\delta}{1-\delta} \rceil}$. Otherwise, 
\[
n^{2-\frac{(1-\delta)^2}{r+1-2\delta}} = n^{2-(1-\delta)/ \frac{r+1-2\delta}{1-\delta} }
= n^{2-(1-\delta)/ \left(\frac{r-\delta}{1-\delta} +1\right)}
> n^{2-(1-\delta)/\lceil \frac{r-\delta}{1-\delta} \rceil}.
\]

Let $G$ be a graph on $n$ vertices with at least $n^{2-(1-\delta)/\lceil \frac{r-\delta}{1-\delta} \rceil}$ edges which contains no copy of $K_{s+r}$. We show that $\alpha_s(G) \ge n^{\delta}$. By Corollary~\ref{cor:sudakov} graph $G$ contains a subset of vertices $U$ of size~$n^\delta$ such that any $W\subseteq U$ of size $r$ has $|N(W)|\ge n^{\delta}$. If $G[U]$ contains no $K_s$, then $U$ is an $s$-independent set and we are done. So suppose it contains a copy of $K_s$ and denote by $W$ any $r$ vertices of such copy (recall that $r\le s$). Clearly, $|N(W)|\ge n^{\delta}$. If $N(W)$ contains a copy of~$K_s$, then together with
the vertices in $W$ we obtain a complete subgraph of $G$ on $s+r$ vertices, a contradiction. Thus $N(W)$ is an $s$-independent set of size at least $n^{\delta}$.
\end{proof}


\section{Better lower bounds on $\RT_s(n,K_{s+1}, n^{\delta})$ for certain values of $\delta$}\label{sec:r_eq_1}

Observe that Theorems \ref{thm:main1}, \ref{thm:main2} and \ref{thm:ub_delta} (applied with $r=1$) immediately yield the following statement.
\begin{theorem}\label{thm:r_eq_1}
Let $\varepsilon>0$ and $\frac 12 < \delta < 1$. Then for all sufficiently large $s$, we have 
\[
\Omega\of{n^{1+\delta  - \varepsilon}} = \RT_s(n, K_{s+1}, n^\delta) = O\of{n^{1+\delta}}.
\]
\end{theorem}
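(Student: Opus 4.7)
The plan is to deduce this as a direct corollary of the three bounds already proved, specialized to $r=1$, so no new construction or argument is needed. The work consists of checking that the exponents collapse as advertised and that the ranges of $\delta$ in the lower-bound theorems together cover all of $(1/2,1)$.

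For the lower bound I would split at the threshold $\delta = 1/2 + 1/(2(2r+1)) = 2/3$ (the cutoff between Theorem~\ref{thm:main1} and Theorem~\ref{thm:main2} when $r=1$). If $1/2 < \delta \le 2/3$, I apply~\eqref{thm:main1:eq1} of Theorem~\ref{thm:main1} with $r=1$: the exponent $2 - (1-\delta)/r - \varepsilon$ becomes $2 - (1-\delta) - \varepsilon = 1 + \delta - \varepsilon$, giving $\RT_s(n,K_{s+1},n^\delta) = \Omega(n^{1+\delta - \varepsilon})$ for all sufficiently large $s$. If $2/3 < \delta < 1$, I apply Theorem~\ref{thm:main2} with $r=1$ and obtain the same $\Omega(n^{1+\delta - \varepsilon})$ bound, with the required $s_0$ taken as the maximum of the two thresholds.

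For the upper bound I would invoke Theorem~\ref{thm:ub_delta} with $r=1$. In this case $(r - \delta)/(1-\delta) = 1$ is an integer, so the first branch of the theorem applies and gives
\[
\RT_s(n,K_{s+1},n^\delta) < n^{2 - (1-\delta)^2/(r - \delta)} = n^{2 - (1-\delta)} = n^{1+\delta},
\]
which is exactly the claimed $O(n^{1+\delta})$. As remarked after the statement of the main bounds in the introduction, this upper bound is actually trivial: any $K_{s+1}$-free graph $G$ satisfies $\Delta(G) \le \alpha_s(G) < n^\delta$ (since the neighborhood of any vertex must be $K_s$-free), hence $|E(G)| \le n \cdot n^\delta / 2$. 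Either justification closes the argument. There is no real obstacle — the only thing to take care of is verifying that the two ranges of $\delta$ in the lower-bound theorems overlap with $\delta = 2/3$, which is covered by Theorem~\ref{thm:main1}.
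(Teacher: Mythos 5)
Your proposal is correct and matches the paper's proof exactly: the paper derives Theorem~\ref{thm:r_eq_1} by applying Theorems~\ref{thm:main1}, \ref{thm:main2}, and~\ref{thm:ub_delta} with $r=1$, noting (as you do) that the two lower-bound ranges $\left(\frac12,\frac23\right]$ and $\left(\frac23,1\right)$ together cover $\left(\frac12,1\right)$ and that $\frac{r-\delta}{1-\delta}=1$ is an integer when $r=1$. The side remark about the trivial upper bound via $\Delta(G)<\alpha_s(G)$ is also what the paper notes.
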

As it was already observed this is also optimal with respect to $\delta$, since for $\delta\le 1/2$ $\RT_s(n, K_{s+1}, n^\delta)=0$. We will show now that for specific values of $\delta$ one can basically remove $\varepsilon$ from the exponent. 

First we recall some basic properties of generalized quadrangles. A \textit{generalized quadrangle} of order $(p,q)$ is an incidence structure on a set~${P}$ of points and a set~$\mathcal{L}$ of lines such that:
\begin{enumerate}[label=$({\textup{Q}}\textup{\arabic*})$]
\item any two points lie in at most one line,
\item\label{Q:2} if $u$ is a point not on a line~$ L$, then there is a unique point $w\in L$ collinear with~$u$, and hence, no three lines form a triangle,
\item every line contains~$p+1$ points, and every point lies on~$q+1$ lines,
\item\label{Q:4} $|P| = (pq+1)(p+1)$ and $|\mathcal{L}| = (pq+1)(q+1)$,
\item\label{Q:5} $p\le q^2$ and $q\le p^2$.
\end{enumerate}

\begin{theorem}\label{thm:quad}
Let $s\ge 2$. If a generalized quadrangle of order $(p,q)$ exists, then
\begin{equation}\label{thm:quad:1}
\RT_s((pq+1)(p+1), K_{s+1}, \Theta(pq)) = \Theta(p^3q^2),
\end{equation}
where the hidden constants depend only on $s$.
\end{theorem}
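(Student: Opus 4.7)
The plan is to mimic the random constructions of Sections~\ref{sec:main1} and~\ref{sec:main2}, but with the generalized quadrangle playing the role of the (thinned or decimated) affine plane. The key advantage is that property~\ref{Q:2} already provides the structural absence of ``triangles of lines'', so we may keep every point and every line of the quadrangle and only randomize the $s$-partitions inside each line. The upper bound in~\eqref{thm:quad:1} is essentially immediate: the observation quoted in the introduction says any $K_{s+1}$-free graph satisfies $\Delta(G) < \alpha_s(G)$, so if $G$ is a $K_{s+1}$-free graph on $n = (pq+1)(p+1)$ vertices with $\alpha_s(G) \le c\, pq$, then $|E(G)| < n\, c\, pq /2 = O(p^3 q^2)$, with the hidden constant depending only on $s$ (via $c$).

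For the lower bound take $V(G)$ to be the $n = (pq+1)(p+1)$ points of the quadrangle and, independently for each line $L$, draw a uniform random partition $\chi_L : L \to [s]$ and place on $L$ the complete $s$-partite graph determined by $\chi_L$. Property~\ref{Q:2} forces every clique of $G$ to lie within a single line: if three pairwise-adjacent vertices $v_1, v_2, v_3$ used three \emph{distinct} carrying lines $L_{12}, L_{13}, L_{23}$, these would pairwise meet at three distinct points, giving a triangle of lines forbidden by~\ref{Q:2}; hence the three lines coincide, and an easy induction extends this to every clique. Since each line supports only a complete $s$-partite graph, no $K_{s+1}$ can occur in $G$. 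Moreover, by~\ref{Q:4} we have $|\mathcal{L}| = (pq+1)(q+1) = \Theta(pq^2)$, while each $G[L]$ has in expectation $(1-1/s)\binom{p+1}{2}$ edges; since distinct lines share at most one point by (Q1) and therefore contribute edge-disjoint subgraphs, a routine concentration argument gives $|E(G)| = \Theta(p^3 q^2)$ with high probability.

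The main (and only serious) obstacle is the bound on $\alpha_s(G)$. Fix $\alpha = c\, s\log s \cdot pq$ for a sufficiently large constant $c = c(s)$ and a set $C \subseteq V(G)$ with $|C| = \alpha$. The events $\{K_s \not\subseteq G[L\cap C]\}$ are independent over $L \in \mathcal{L}$ and each is at most $s(1-1/s)^{|L \cap C|}$. Using the double count $\sum_L |L \cap C| = (q+1)|C|$ (since every point lies on $q+1$ lines), one obtains, exactly as in the proof of~\ref{G1:a}/\ref{G2:a},
\[
\Pr\bigl[K_s \not\subseteq G[C]\bigr] \;\le\; s^{|\mathcal{L}|} \exp\!\left(-\tfrac{(q+1)\alpha}{s}\right).
\]
A union bound over $\binom{n}{\alpha} \le n^{\alpha}$ choices of $C$, together with $n = O(p^2 q)$ and $|\mathcal{L}| = \Theta(pq^2)$, shows that for $c$ large enough the negative exponent $-(q+1)\alpha/s$ dominates both $\alpha \log n$ and $|\mathcal{L}|\log s$, so $\alpha_s(G) < \alpha = \Theta(pq)$ with high probability. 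Intersecting this event with the edge-count event (and recalling that $K_{s+1}$-freeness is deterministic) yields a graph witnessing the lower bound in~\eqref{thm:quad:1}.
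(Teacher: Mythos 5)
Your proposal is correct and follows essentially the same approach as the paper: build the random graph from the generalized quadrangle by placing a uniformly random complete $s$-partite graph on each line, use (Q2) for $K_{s+1}$-freeness, double-count point–line incidences to bound the $s$-independence number via a union bound, and use $\Delta(G)<\alpha_s(G)$ for the matching upper bound. The only differences are cosmetic (you take $\alpha=c\,s\log s\cdot pq$ where the paper takes $\alpha=s^2pq$, and you spell out the clique-in-one-line argument slightly more explicitly).
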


\begin{proof}
For a given generalized quadrangle $(P,\mathcal{L})$ we construct the random graph $\mathbb{G}=(V,E)$ with $V=P$ as follows. For every $L\in \mathcal{L}$, let $\chi_L : L \to  {[s]}$ be a random partition of the vertices of $L$ into $s$ classes chosen uniformly at random, where the classes need not have the same size and the unlikely event that a class is empty is permitted. Next we embed the $s$-partite complete graph on $\chi^{-1}_L (1) \cup \chi^{-1}_L (2) \cup \dots \cup \chi^{-1}_L (s)$. Observe that not only are $\mathbb{G}[L]$ and $\mathbb{G}[L']$ edge disjoint for distinct $L, L' \in \mathcal{L}$, but also that the partitions for $L$ and $L'$ were determined independently.

Observe that by~\ref{Q:2} $\mathbb{G}$ is $K_{s+1}$-free and by the Chernoff bound \whp $|E| = |\mathcal{L} |\cdot \Omega(p^2) = \Omega(p^3q^2)$.

Now we will show that $\alpha_s(\mathbb{G}) \le s^2 p q$. Consider any $C \in {V \choose s^2 pq}$. We will bound the probability that $\mathbb{G}[C] \not \supseteq K_s$. 
For each $L \in \mathcal{L}$, let $X_L$ be the event that $K_s \not \subseteq \mathbb{G}[L \cap C]$. Clearly, 
\[
\Pr(X_L) \le s \left( 1-\frac{1}{s}\right)^{|L\cap C|} \le s e^{-|L\cap C|/s}
\]
and by independence, 
\[
\Pr \Big( K_s \not \in \mathbb{G}[C] \Big) 
\leq \Pr \Big( \bigcap_{L \in \mathcal{L}} X_L \Big) 
\leq \prod_{L\in \mathcal{L}} s e^{-|L\cap C|/s}
\leq s^{|\mathcal{L}|} e^{-\sum_{L\in \mathcal{L}} |L\cap C|/s }.
\]
Since $\sum_{L\in \mathcal{L}} |L\cap C| = |C|(q+1)$, we get
\[
\Pr \Big( K_s \not \in \mathbb{G}[C] \Big) 
\leq s^{|\mathcal{L}|} e^{-|C|(q+1)/s}
= e^{|\mathcal{L}| \log s -|C|(q+1)/s}.
\]
So by the union bound, the probability that there exists a subset of $s^2 p q$ vertices in $V$ that contains no $K_s$ is at most
\[
\binom{|V|}{s^2 pq} e^{|\mathcal{L}| \log s -|C|(q+1)/s}
\le  |V|^{s^2 pq} e^{|\mathcal{L}| \log s -|C|(q+1)/s}
= e^{s^2 pq \log |V| + |\mathcal{L}| \log s -|C|(q+1)/s} = o(1),
\]
because of~\ref{Q:4} and $|C| = s^2pq$. Thus, \whp $\alpha_s(\mathbb{G}) \le s^2 pq$ and consequently
\[
\RT_s((pq+1)(p+1), K_{s+1}, \Theta(pq)) = \Omega(p^3q^2).
\]

Finally observe that the upper bound in~\eqref{thm:quad:1} is trivial, since if $G$ is a $K_{s+1}$-free graph of order $(pq+1)(p+1)$, then $\Delta(G) < \alpha_s(G) = O(pq)$ yielding $|E(G)| = O(p^3q^2)$.
\end{proof}

It is known that when $(p,q)\in \{(r,r), (r,r^2),(r^2,r),(r^2,r^3),(r^3,r^2)\} $ for any arbitrary prime power $r$, then the generalized quadrangle exists (see, e.g., ~\cite{GO,TH}) yielding the following:

\begin{center}
\begin{tabular}{c | c | c | c | c }
$p$ & $q$ & $(pq+1)(p+1)$  & $pq$ & $p^3 q^2$ \\
\hline\hline
$r$ & $r$ & $ \Theta(r^3)$ & $r^2$ & $r^5$\\ \hline
$r$ & $r^2$ & $ \Theta(r^4)$ & $r^3$ & $r^7$\\ \hline
$r^2$ & $r$ & $ \Theta(r^5)$ & $r^3$ & $r^8$\\ \hline
$r^2$ & $r^3$ & $ \Theta(r^7)$ & $r^5$ & $r^{12}$\\ \hline
$r^3$ & $r^2$ & $ \Theta(r^8)$ & $r^5$ & $r^{13}$\\ \hline
\end{tabular}
\end{center}

\smallskip

Thus, by letting $n$ to be $\Theta(r^3)$, $\Theta(r^4)$, $\Theta(r^5)$, $\Theta(r^7)$, and $\Theta(r^8)$, respectively,  we get the following corollary.
\begin{corollary}\label{cor:quad}
Let $\delta \in \{3/5, 5/8, 2/3, 5/7, 3/4\}$. Then
\[
\RT_s(n,K_{s+1}, \Theta(n^{\delta})) = \Theta(n^{1+\delta}).
\]
\end{corollary}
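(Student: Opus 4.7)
The plan is to prove the upper and lower bounds separately. The upper bound is immediate: as noted earlier in the paper, any $K_{s+1}$-free graph $G$ on $n$ vertices satisfies $\Delta(G)<\alpha_s(G)$, because the neighborhood of every vertex is $K_s$-free. So $\alpha_s(G)=O(n^{\delta})$ forces $|E(G)|\le n\Delta(G)/2=O(n^{1+\delta})$, and this holds for every $\delta\in(0,1)$, not just for those in the list.

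For the lower bound I would invoke Theorem~\ref{thm:quad} once for each $\delta$ in the list, choosing the generalized-quadrangle parameters $(p,q)$ from the preceding table so that the triple $((pq+1)(p+1),\;pq,\;p^3q^2)$ realizes $(n,n^{\delta},n^{1+\delta})$ up to constants. Reading the rows of the table, $\delta=2/3$ pairs with $(p,q)=(r,r)$; $\delta=3/4$ with $(r,r^2)$; $\delta=3/5$ with $(r^2,r)$; $\delta=5/7$ with $(r^2,r^3)$; and $\delta=5/8$ with $(r^3,r^2)$. Whenever $r$ is a prime power and $n$ happens to equal $(pq+1)(p+1)$, Theorem~\ref{thm:quad} directly produces a graph on $n$ vertices with $\alpha_s=\Theta(n^{\delta})$ and $\Theta(n^{1+\delta})$ edges, which is what we want.

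To deal with an arbitrary $n$, I would reuse the prime-gap estimate from~\cite{BHP} already cited in the proof of Theorem~\ref{thm:main1}: for each $k\in\{3,4,5,7,8\}$ and every sufficiently large $n$ there is a prime $r$ with $n^{1/k}\le r\le (1+o(1))n^{1/k}$, and hence the corresponding $n':=(pq+1)(p+1)$ lies in $[n,(1+o(1))n]$. I would then apply Theorem~\ref{thm:quad} at $n'$ to get a graph $G'$, and delete $n'-n=o(n)$ vertices to obtain $G$ on exactly $n$ vertices. Since $\alpha_s$ does not increase under taking induced subgraphs and every vertex of $G'$ has degree at most $\alpha_s(G')=O(n^{\delta})$ (by the same trivial bound), the deletion costs at most $o(n)\cdot O(n^{\delta})=o(n^{1+\delta})$ edges, leaving $|E(G)|=\Theta(n^{1+\delta})$.

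The heavy lifting is done by Theorem~\ref{thm:quad}, so there is no genuine obstacle; the only delicate step is interpolating between the values of $n$ for which a matching generalized quadrangle exists, and that is handled cleanly by the $(1+o(1))$ prime-gap bound combined with the trivial $\Delta\le\alpha_s$ inequality.
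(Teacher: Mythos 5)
Your proposal is correct and matches the paper's approach: the corollary is derived directly from Theorem~\ref{thm:quad} and the table of generalized-quadrangle orders $(p,q)$, with the trivial upper bound coming from $\Delta(G)<\alpha_s(G)$ for $K_{s+1}$-free $G$. The paper leaves the interpolation to arbitrary $n$ implicit, whereas you spell out the prime-gap argument (from \cite{BHP}) plus vertex deletion using $\Delta(G')=O(n^{\delta})$; that is exactly the intended reading, and it follows the same pattern the paper uses explicitly in the proof of Theorem~\ref{thm:main1}.
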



In view of Theorems~\ref{thm:r_eq_1} and \ref{thm:quad} and the above corollary we conjecture that actually
\[
\RT_s(n,K_{s+1}, n^{\delta}) = \Theta(n^{1+\delta})
\] 
for any $1/2 < \delta < 1$.


\section{Phase transition of $\RT_s(n, K_{2s+1}, f)$}\label{sec:big_r}

Throughout this section $\omega = \omega(n)$ is a function which goes to infinity arbitrarily slowly together with $n$.

Here we briefly discuss an extension  of a recent result of Balogh, Hu, and Simonovits~\cite{BHS}, who showed (answering a question of Erd\H{o}s and S\'os~\cite{ES}) that
\begin{equation}\label{eq:BHS:1}
\RT_2(n,K_5, \sqrt{n\log n} / \omega) = o(n^2)
\end{equation}
and
\begin{equation}\label{eq:BHS:2}
\RT_2(n,K_5, c \sqrt{n\log n}) = n^2/4 + o(n^2)
\end{equation}
for any $c>1$. Thus, one can say that $K_5$ has a (Ramsey-Tur\'an) phase transition at $c\sqrt{n\log n}$ for every $c>1$. One can easily show that for any $s\ge 2$, $K_{2s+1}$ also has a phase transition (with respect to the $s$-independence number). This will follow from the following two theorems.

\begin{theorem}\label{thm:ub_ks_r}
Let $s\ge 2$. Then 
\[
\RT_s(n,K_{2s+1}, \f_{s,s+1}(n/\omega)) = o(n^2).
\]
\end{theorem}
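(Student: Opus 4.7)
The plan is to argue by contradiction. Suppose $G$ is a $K_{2s+1}$-free graph on $n$ vertices with $\alpha_s(G) < f := \f_{s,s+1}(n/\omega)$ and $e(G) \ge \varepsilon n^2$ for some fixed $\varepsilon > 0$. The engine is the following observation: if one can locate a copy of $K_s$ in $G$ on vertices $v_1,\ldots,v_s$ whose common neighborhood $W := \bigcap_{i=1}^{s} N_G(v_i)\setminus\{v_1,\ldots,v_s\}$ has $|W| \ge n/\omega$, then $G[W]$ is $K_{s+1}$-free (since a $K_{s+1}$ in $W$ would combine with $v_1,\ldots,v_s$ to form a forbidden $K_{2s+1}$), and monotonicity of $\f_{s,s+1}$ gives
\[
\alpha_s(G) \ge \alpha_s(G[W]) \ge \f_{s,s+1}(|W|) \ge \f_{s,s+1}(n/\omega) = f,
\]
a contradiction. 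The whole task therefore reduces to producing such a $K_s$.

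To produce it, I will apply Lemma~\ref{lem:dependent} with $r = s$, $a = f$, $m = cn$, and $t = \lceil C\log n \rceil$, where $c, C > 0$ are constants depending on $\varepsilon$ and $s$. Using $d = 2e(G)/n \ge 2\varepsilon n$ gives
\[
\frac{d^t}{n^{t-1}} \ge (2\varepsilon)^t n = n^{1 - C\log(1/(2\varepsilon))} \quad \text{and} \quad n^s\left(\frac{m}{n}\right)^t = n^{s - C\log(1/c)}.
\]
Setting $C := 1/(4\log(1/(2\varepsilon)))$ makes the first quantity at least $n^{3/4}$, and the upper bound~\eqref{f:1} guarantees that $f = O(\sqrt{n}\,(\log n)^{4s^2})$ is dominated by $n^{3/4}$ for all large $n$; setting $c := (2\varepsilon)^{8s}$ makes the second quantity equal $n^{-s} = o(1)$. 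Lemma~\ref{lem:dependent} then delivers $U \subseteq V(G)$ with $|U| \ge f$ such that every $s$-subset of $U$ has at least $cn$ common neighbors in $G$. Since $|U| \ge f > \alpha_s(G) \ge \alpha_s(G[U])$, the subgraph $G[U]$ must contain a copy of $K_s$, and its common neighborhood in $G$ has size at least $cn - s \ge n/\omega$ for $n$ large enough (because $\omega \to \infty$). This supplies the $K_s$ required in the first paragraph.

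The main obstacle is that $t$ needs to grow with $n$ rather than being held constant. With a constant $t$, Lemma~\ref{lem:dependent} would only deliver $m$ of order $n^{1-\Omega(1)}$, which is far too small to satisfy $m \ge n/\omega$ for an arbitrarily slowly growing $\omega$, and the chain of inequalities in the first paragraph would break at the comparison $\f_{s,s+1}(|W|) \ge \f_{s,s+1}(n/\omega)$. Taking $t = \Theta(\log n)$ creates just enough room to simultaneously push $d^t/n^{t-1}$ above the modest target $a = f = O(\sqrt{n}\,\mathrm{polylog})$ and suppress the error term $n^s(m/n)^t$ while keeping $m$ linear in $n$. This delicate balancing, made possible by the polynomial-versus-linear gap between $f$ and $n$ coming from~\eqref{f:1}, is the technical heart of the argument.
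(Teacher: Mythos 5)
Your proof is correct, but it takes a genuinely different route from the paper's. The paper applies the dependent random choice lemma (via its Corollary~\ref{cor:sudakov}-style computation, stated as Corollary~\ref{cor:dependent2}) with $r=s+1$, $t=2s+1$ \emph{constant}, $a = n/\omega$ and $m = \f_{s,s+1}(n/\omega)$: it extracts a \emph{large} set $U$ (size $n/\omega$) in which every $(s+1)$-subset has a \emph{small} common neighborhood (size $\ge \f_{s,s+1}(n/\omega)$). It then finds a $K_{s+1}$ inside $U$ by the Erd\H{o}s--Rogers defining property (since $|U|=n/\omega$ and $\alpha_s(G[U]) < \f_{s,s+1}(n/\omega)$), and observes that the common neighborhood $N(W)$ of that $K_{s+1}$ must be $K_s$-free, giving $\alpha_s(G) \ge |N(W)| \ge \f_{s,s+1}(n/\omega)$ directly, with no appeal to monotonicity of $\f_{s,s+1}$. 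You invert this: take $r=s$, a small target $a=f$ for $|U|$, a linear target $m = cn$ for the common neighborhoods, and compensate for the now-constant ratio $m/n$ by letting $t=\Theta(\log n)$ grow. You then find a $K_s$ inside $U$ (because $|U|\ge f > \alpha_s(G)$), observe its common neighborhood $W$ is $K_{s+1}$-free of linear size, and invoke monotonicity of $\f_{s,s+1}$ to conclude $\alpha_s(G) \ge \f_{s,s+1}(|W|) \ge \f_{s,s+1}(n/\omega)$. Both arguments ultimately rest on the polynomial-vs-linear gap $\f_{s,s+1}(n)=O(\sqrt n\,\mathrm{polylog}\,n)$ from~\eqref{f:1}. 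The paper's route is cleaner in that $t$ stays constant and no monotonicity of $\f_{s,s+1}$ is needed; your route is a valid alternative and the logarithmic-$t$ trick does work inside Lemma~\ref{lem:dependent} as stated (nothing there restricts $t$ to be bounded). One small caveat: when you take the ceiling $t=\lceil C\log n\rceil$, the bound $d^t/n^{t-1}\ge(2\varepsilon)^t n$ degrades by the constant factor $2\varepsilon$ rather than holding with equality at $n^{3/4}$; this is harmless but worth stating.
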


First we derive another corollary from Lemma~\ref{lem:dependent}.

\begin{corollary}\label{cor:dependent2}
Let $\varepsilon>0$ be fixed and $s\ge 1$ be an integer. Let $G$ be a graph on $n$ vertices with at least $\varepsilon n^2$ edges. 
Then for sufficiently large $n$, $G$ contains a subset $U$ of at least $n/\omega$ vertices such that every $s+1$ vertices in $U$ have at least $\f_{s,s+1}(n/\omega)$ common neighbors.
\end{corollary}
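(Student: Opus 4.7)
The plan is a direct application of Lemma~\ref{lem:dependent} with the parameters $a = n/\omega$, $m = \f_{s,s+1}(n/\omega)$, the lemma's $r$ set to $s+1$, and $t$ chosen just large enough. The hypothesis $|E(G)| \geq \varepsilon n^2$ gives average degree $d \geq 2\varepsilon n$, so the main term of the inequality is
\[
\frac{d^t}{n^{t-1}} \;\geq\; (2\varepsilon)^t\,n \;=\; \Omega(n)
\]
for any fixed $t$, which already comfortably exceeds $a = n/\omega$. The whole task therefore reduces to choosing $t$ so that the correction term
\[
n^{s+1}\!\left( \frac{m}{n}\right)^{\!t} \;=\; \frac{m^t}{n^{t-s-1}}
\]
is small compared to the main term.

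The key choice is $t = 2s+1$, which makes the exponent of $n$ in the denominator equal to $s$ and hence the correction is of magnitude $m^{2s+1}/n^s$. Plugging in the Erd\H{o}s-Rogers upper bounds from~\eqref{f:1} and~\eqref{f:1b}, which give $m = O\bigl((\log n)^{4s^2}\sqrt{n/\omega}\bigr)$, one obtains
\[
\frac{m^{2s+1}}{n^{s}} \;=\; O\!\left( \frac{(\log n)^{4s^2(2s+1)}\,n^{1/2}}{\omega^{s+1/2}} \right) \;=\; o(n).
\]
Combining the two estimates yields $d^t/n^{t-1} - n^{s+1}(m/n)^t \geq (2\varepsilon)^{2s+1} n - o(n) \geq n/\omega = a$ for $n$ large, and Lemma~\ref{lem:dependent} delivers the required $U$.

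The only non-mechanical part is the choice of $t$. The exponent of $n$ in the correction is $s+1-t/2$, so any $t \leq 2s$ leaves an error at least of order $n$ (defeating the main term), while $t = 2s+1$ is the smallest integer that drives it to $o(n)$. This parity threshold, requiring $t$ strictly larger than $2s$, is precisely what pairs the corollary with cliques of size $2s+1$ and thus supplies the ingredient for the phase transition studied in the remainder of the section. (The edge case $s=1$ is vacuous since $\f_{1,2}(n/\omega) = 0$; simply take $U = V(G)$.)
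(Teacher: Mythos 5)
Your proof is correct and takes essentially the same route as the paper: both apply Lemma~\ref{lem:dependent} with $r=s+1$, $t=2s+1$, $a=n/\omega$, $m=\f_{s,s+1}(n/\omega)$, $d\ge 2\varepsilon n$, and both invoke the Erd\H{o}s--Rogers upper bound~\eqref{f:1} to show the correction term $m^{2s+1}/n^s$ is $o(n)$. The only cosmetic difference is that the paper phrases the bound as $\f_{s,s+1}(n/\omega)=o(n^{(s+1)/(2s+1)})$ rather than carrying the explicit polylogarithmic factor, but the computation and the choice of $t$ are identical.
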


\begin{proof}
Let $r=s+1$, $t=2s+1$, $a = n/\omega$, $m = \f_{s,s+1}(n/\omega)$, and $d=2\varepsilon n$. Observe that by~\eqref{f:1}, $\f_{s,s+1}(n/\omega) = o(n^{(s+1)/(2s+1)})$.
Thus,
\[
\frac{d^t}{n^{t-1}} - {n^r} \left( \frac{m}{n} \right)^{t} 
\ge (2\varepsilon)^{2s+1} n - \frac{(\f_{s,s+1}(n/\omega))^{2s+1}}{n^s} \ge n/\omega = a,
\]
for sufficiently large~$n$.
Now the corollary follows from Lemma~\ref{lem:dependent}.
\end{proof}

\begin{proof}[Proof of Theorem~\ref{thm:ub_ks_r}]
Let $\varepsilon>0$ be an arbitrarily small constant and let $G$ be a $K_{2s+1}$-free graph of order $n$ with $\varepsilon n^2$ edges and $\alpha_s(G)< \f_{s,s+1}(n/\omega)$. By Corollary~\ref{cor:dependent2} there is a set $U$ of size $n/\omega$ such that every $s+1$ vertices in $U$ have at least $\f_{s,s+1}(n/\omega)$ common neighbors. First observe that $G[U]$ contains a copy of $K_{s+1}$. Otherwise, $G[U]$ is a $K_{s+1}$-free graph of order $n/\omega$ with $\alpha_s(G[U])<\f_{s,s+1}(n/\omega)$. But clearly this cannot happen.
Now let $W$ be the set of vertices of a copy of $K_{s+1}$ in $U$. If $N(W)$ contains a copy of $K_s$, then such a copy together with $G[W]$ gives a copy of $K_{2s+1}$. Thus, $G[N(W)]$ is $K_s$-free. But this yields that $\alpha_s(G)\ge |N(W)|\ge \f_{s,s+1}(n/\omega)$, a contradiction.
\end{proof}

The following extends a result of Erd\H{o}s, Hajnal, Simonovits, S\'os, and Szemer\'edi~\cite{EHSSS} for small $s$-independence numbers.

\begin{theorem}\label{thm:lb_ks_r}
Let $k\ge 2$, $s\ge 2$, $1\le r\le k-1$. Let $c$ be a constant satisfying
\[
c > 
\begin{cases}
1 \text{ if } s=2,\\
k \text{ if } s\ge 3.\\
\end{cases}
\]
Then,
\[
\RT_s(n,K_{ks+r}, c\;\!\f_{s,s+1}(n/k)) \ge \frac{1}{2} \left(1-\frac{1}{k} \right) n^2 + o(n^2). 
\]
Furthermore, if $r=1$, then 
\[
\RT_s(n,K_{ks+1}, c\;\!\f_{s,s+1}(n/k)) = \frac{1}{2} \left(1-\frac{1}{k} \right) n^2 + o(n^2).
\]
\end{theorem}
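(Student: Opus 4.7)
The plan is to prove the lower bound by a natural $k$-partite blow-up, and the matching upper bound in the case $r=1$ via Szemer\'edi's regularity lemma together with an embedding argument.

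\emph{Lower bound.} I would let $G_0$ be a $K_{s+1}$-free graph on $n/k$ vertices achieving $\alpha_s(G_0) = \f_{s,s+1}(n/k)$ (such a graph exists by the very definition of $\f_{s,s+1}$). Build $G$ on $V_1 \cup \cdots \cup V_k$, where each $V_i$ is a disjoint copy of $V(G_0)$, by placing a copy of $G_0$ on each $V_i$ and adding every edge between distinct parts. Then $|E(G)| \ge \binom{k}{2}(n/k)^2 = \frac{1}{2}(1-1/k)n^2$, and a clique in $G$ is a disjoint union of cliques drawn from the parts, each contributing at most $s$ vertices, so $G$ is $K_{ks+1}$-free and hence $K_{ks+r}$-free for every $r\ge 1$. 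For the independence bound, if $G[S]$ is $K_s$-free and $S_i := S\cap V_i$, the completeness of cross-part edges yields $\omega(G[S]) = \sum_i \omega(G_0[S_i])$. The constraint $\omega(G[S]) \le s-1$ forces each $G_0[S_i]$ to be $K_s$-free (so $|S_i| \le \alpha_s(G_0) = \f_{s,s+1}(n/k)$) and bounds the number of nonempty $S_i$ by $s-1$. When $s=2$ this means $S$ lies in a single part, giving $\alpha_s(G) \le \f_{2,3}(n/k)$, so any $c>1$ suffices; when $s\ge 3$ we get $\alpha_s(G) \le \min(k,s-1)\,\f_{s,s+1}(n/k) \le k\,\f_{s,s+1}(n/k)$, so $c>k$ suffices.

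\emph{Upper bound when $r=1$.} Suppose $G$ is $K_{ks+1}$-free with $\alpha_s(G) < c\,\f_{s,s+1}(n/k) = o(n)$ (using~\eqref{f:1}). I would apply Szemer\'edi's regularity lemma with a sufficiently small parameter $\varepsilon$, obtaining clusters $V_1,\dots,V_T$ of common size $m = \Theta(n/T)$, and form the reduced graph $R$ on $[T]$ by including the edge $ij$ whenever $(V_i,V_j)$ is $\varepsilon$-regular with density at least $\sqrt\varepsilon$. I claim $R$ is $K_{k+1}$-free: if clusters $V_{j_1},\dots,V_{j_{k+1}}$ formed a $K_{k+1}$ in $R$, one would embed a $K_{(k+1)s} \supseteq K_{ks+1}$ into $G$ greedily. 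Standard regularity inheritance keeps common neighborhoods linear in the cluster size, and since $\alpha_s$ of any such linear subset is at most $\alpha_s(G) = o(m)$, at step $\ell$ one locates a $K_s$ inside $V_{j_\ell}$ that lies in the common neighborhood of the $(\ell-1)s$ vertices already selected; the resulting $K_{(k+1)s}$ contradicts $K_{ks+1}$-freeness. Hence $R$ is $K_{k+1}$-free, so Tur\'an's theorem gives $|E(R)|\le \frac{1}{2}(1-1/k)T^2$, and standard regularity bookkeeping (edges inside clusters, inside the exceptional class, and across irregular or sparse pairs each contribute $o(n^2)$) then yields $|E(G)|\le \frac{1}{2}(1-1/k)n^2 + o(n^2)$, matching the lower bound.

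The main obstacle is the embedding step in the upper bound: one must ensure simultaneously that (i) the common neighborhoods across clusters remain linear in the cluster size, which is secured by iteratively applying the slicing/inheritance lemma and choosing typical $s$-tuples, and (ii) each such linear subset actually contains a $K_s$, which comes for free from $\alpha_s(G) = o(n)$. Because $k$ and $s$ are fixed constants and $T$ is chosen after $\varepsilon$, these two properties can be balanced by taking $\varepsilon$ small in terms of $k$ and $s$ only, so the greedy embedding can always be completed.
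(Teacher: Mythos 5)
Your lower bound construction is essentially the same as the paper's: a complete $k$-partite blow-up with a $K_{s+1}$-free graph of small $\alpha_s$ placed on each part, with the same case split between $s=2$ (where an independent set in $G$ must lie in one part, so $c>1$ suffices) and $s\ge 3$ (where the bound $\alpha_s(G)\le k\,\alpha_s(G_0)$ is used and $c>k$ is needed). The only cosmetic difference is that the paper takes $H$ of order $n/k+o(n)$ with $\alpha_s(H)<(c/k)\f_{s,s+1}(n/k)$, using $c/k>1$, whereas you take an extremal $G_0$ with $\alpha_s(G_0)=\f_{s,s+1}(n/k)$; both give $\alpha_s(G)<c\,\f_{s,s+1}(n/k)$. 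Your extra observation that $\alpha_s(G)\le\min(k,s-1)\alpha_s(G_0)$ is correct and is in fact slightly stronger than what the paper records, though the theorem as stated only needs $c>k$.

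For the matching upper bound when $r=1$, you and the paper diverge: the paper simply observes $c\,\f_{s,s+1}(n/k)=o(n)$ and cites Theorem 2.6(a) of Erd\H{o}s--Hajnal--Simonovits--S\'os--Szemer\'edi~\cite{EHSSS}, which says $\RT_s(n,K_{ks+1},o(n))\le\frac12(1-1/k)n^2+o(n^2)$, whereas you sketch a self-contained proof via the regularity lemma. Your sketch follows the standard Ramsey--Tur\'an regularity argument (reduced graph is $K_{k+1}$-free, apply Tur\'an to the reduced graph, bound the error terms), which is indeed the spirit of the proof in~\cite{EHSSS}; so while the route is ``different'' from the paper's one-line citation, it is not different from the proof being cited. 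One concrete technical point in your sketch that would need repair: using density threshold $\sqrt{\varepsilon}$ in the reduced graph is incompatible with the iterated slicing argument you invoke. After embedding $(k+1)s$ vertices, the regularity parameter of the restricted pairs is roughly $\varepsilon/d^{(k+1)s}$ (since the candidate sets shrink by a factor about $d$ at each step), and with $d=\sqrt\varepsilon$ this is $\varepsilon^{1-(k+1)s/2}$, which blows up as $\varepsilon\to 0$ whenever $(k+1)s>2$, i.e.\ always in the regime $k,s\ge 2$. The fix is routine: introduce a separate density threshold $d$, choose $\varepsilon\ll d^{(k+1)s}$, run the embedding, and then let $d\to 0$ at the end; the pairs of density below $d$ only contribute $O(dn^2)$ edges, which is absorbed into $o(n^2)$ in the limit. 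With that adjustment your argument goes through, but given that~\cite{EHSSS} already proves exactly the needed estimate, citing it (as the paper does) is the cleaner path.
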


\begin{proof}
First we assume that $s\ge 3$ and $c > k$.
Let $H$ be a $K_{s+1}$-free graph of order $n/k  + o(n)$ with $\alpha_s(H) < c/k\cdot\f_{s,s+1}(n/k)$. The existence of such graph follows from the definition of the Erd\H{o}s-Rogers number. Let $G=(V,E)$ be a graph of order $n$ such that $V=V_1\cup\dots\cup V_{k}$, $|V_1| = \dots = |V_k| = n/k+o(n)$, $G[V_i]$ is isomorphic to $H$ for each $1\le i\le k$, and for each $v\in V_i$ and $w\in V_j$ we have $\{v,w\}\in E$ for any $1\le i < j\le k$. Observe that $G$ is $K_{ks+r}$-free and $\alpha_{s}(G) < c\;\!\f_{s,s+1}(n/k)$. Thus,
\[
\RT_s(n,K_{ks+r}, c\;\!\f_{s,s+1}(n/k)) \ge \binom{k}{2} \left( \frac{n}{k} \right)^2 + o(n^2) = \frac{1}{2} \left(1-\frac{1}{k} \right) n^2 + o(n^2).
\]

Now observe that if $s=2$ and $c>1$, then it suffices to assume that $\alpha_s(H) < c\;\!\f_{s,s+1}(n/k)$, since any independent set in $G$ must lie entirely in some $V_i$.

Finally, if $r=1$, then
\[
\RT_s(n,K_{ks+1}, c\;\!\f_{s,s+1}(n/k)) \le \RT_s(n,K_{ks+1}, o(n)) \le \frac{1}{2} \left(1-\frac{1}{k} \right) n^2 + o(n^2),
\]
where the latter follows from Theorem 2.6\,(a) in~\cite{EHSSS}.
\end{proof}

Since by~\eqref{f:1b} we have  $ \f_{2,3}(n/2) \le (1+o(1))\sqrt{n\log n}$ and $  \f_{2,3}(n/\omega) \ge \sqrt{n\log n}/\omega$, the above theorems can be viewed as generalizations of \eqref{eq:BHS:1} and \eqref{eq:BHS:2}, which are the case $k=s=2$ and $r=1$.

In particular, when $k=2$ and $r=1$, we get that $\RT_s(n,K_{2s+1}, n^{\delta}) = n^2/4 + o(n^2)$  for any $1/2<\delta<1$ and $\RT_s(n,K_{2s+1}, n^{\delta})=o(n^2)$ for $\delta \le 1/2$. Also observe that if we replace $K_{2s+1}$ by $K_{2s}$, then $\RT_s(n,K_{2s}, n^{\delta}) = o(n^2)$ for any $\delta<1$ (by Theorem \ref{thm:ub_delta}).
Here we bound $\RT_s(n,K_{2s}, n^{\delta})$ from below using the Zarankiewicz function.
Recall that the \emph{Zarankiewicz function} $\z(n,s)$ denotes the maximum possible number of edges in a $K_{s,s}$-free bipartite graph $G = (U\cup V, E)$ with $|U| = |V| = n$.

\begin{theorem}\label{thm:2s}
Let $s\ge 2$ and $c>2$. Then
\[
\RT_s(n,K_{2s}, c\;\!\f_{s,s+1}(n/2)) \ge \z(n/2,s).
\]
\end{theorem}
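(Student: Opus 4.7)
The plan is to construct $G$ as a two-part blow-up. Partition $V(G)=V_1\cup V_2$ with $|V_1|=\lfloor n/2\rfloor$ and $|V_2|=\lceil n/2\rceil$. On each $V_i$ place a copy of a $K_{s+1}$-free graph $H_i$ that attains the Erd\H{o}s-Rogers minimum, i.e.\ with $\alpha_s(H_i)\le \f_{s,s+1}(n/2)$; such $H_i$ exists by definition of $\f_{s,s+1}$. Between $V_1$ and $V_2$, place a $K_{s,s}$-free bipartite graph $B$ with $\z(n/2,s)$ edges, which exists by definition of the Zarankiewicz function. Let $G$ be the union of the edges of $H_1$, $H_2$ and $B$, so that immediately $|E(G)|\ge |E(B)|=\z(n/2,s)$.

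The two properties of $G$ to verify are $K_{2s}$-freeness and $\alpha_s(G)<c\,\f_{s,s+1}(n/2)$. For the former, if $W\subseteq V(G)$ induced a copy of $K_{2s}$, then $W_i:=W\cap V_i$ would be a clique inside $H_i$, forcing $|W_i|\le s$ by $K_{s+1}$-freeness of $H_i$, hence $|W_1|=|W_2|=s$ by pigeonhole; but then all $s^2$ cross pairs lie in $B$, yielding a forbidden $K_{s,s}\subseteq B$. For the latter, if $S\subseteq V(G)$ induces a $K_s$-free subgraph, then since $B$ contributes only cross-edges we have $G[S\cap V_i]=H_i[S\cap V_i]$, so $|S\cap V_i|\le\alpha_s(H_i)\le\f_{s,s+1}(n/2)$, and summing gives $|S|\le 2\,\f_{s,s+1}(n/2)<c\,\f_{s,s+1}(n/2)$ using $c>2$ and $\f_{s,s+1}(n/2)\to\infty$ by~\eqref{f:1}.

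There is essentially no hard step: the argument is a product-style construction in which the Erd\H{o}s-Rogers graphs inside each class simultaneously control the $s$-independence number and force any putative $K_{2s}$ to split evenly across the two parts, after which the $K_{s,s}$-freeness of the bipartite link rules out that even split while already providing $\z(n/2,s)$ edges. The only point to be careful about is that one cannot find a $K_{2s}$ entirely inside a single $V_i$, which is immediate from $K_{s+1}$-freeness of $H_i$; rounding issues between $\lfloor n/2\rfloor$ and $\lceil n/2\rceil$ are absorbed in the $o(n^2)$-style error and play no role here.
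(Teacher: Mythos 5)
Your construction is the same as the paper's: two Erd\H{o}s--Rogers extremal $K_{s+1}$-free graphs on the halves, joined by a $\z(n/2,s)$-edge $K_{s,s}$-free bipartite graph, with the same verification that any $K_{2s}$ would force a $K_{s,s}$ across and that the $s$-independence number is at most twice $\f_{s,s+1}(n/2)$. You have simply spelled out the two checks the paper dismisses with ``Clearly,'' so the proof is correct and not a genuinely different route.
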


\begin{proof}
Let $H$ be a $K_{s+1}$-free graph of order $n/2$ with $\alpha_s(H) < c/2 \cdot\f_{s,s+1}(n/2)$. Let $G=(V,E)$ be a graph of order $n$ such that $V=V_1\cup V_2$, $|V_1| = |V_2| = n/2$, and $G[V_1]$ and $G[V_2]$ are isomorphic to $H$. Between $V_1$ and $V_2$ we embed a $K_{s,s}$-free bipartite graph with $z(n/2,s)$ edges. Clearly $G$ is $K_{2s}$-free and $\alpha_{s}(G) < c\;\!\f_{s,s+1}(n/2)$.  Thus,
\[
\RT_s(n,K_{2s}, c\;\!\f_{s,s+1}(n/2)) \ge |E| \ge \z(n/2, s).
\]
\end{proof}

It is known due to a result of Bohman and Keevash~\cite{BK} that for sufficiently large $n$ there exists a $K_{s,s}$-free graph of order~$n$ with 
$\Omega\left(n^{2-2/(s+1)} \cdot (\log n)^{1/(s^2-1)}\right)$ edges. Since each graph has a bipartite subgraph with at least half of the edges, we get that 
\[
\z(n, s) \ge \frac{1}{2}\ex(n,K_{s,s}) = \Omega\left(n^{2-2/(s+1)} \cdot (\log n)^{1/(s^2-1)}\right).
\]
This together with Theorem~\ref{thm:2s} implies that 
\begin{equation}\label{eq:K2s}
\RT_s(n,K_{2s}, f) = \Omega\left(n^{2-2/(s+1)} \cdot (\log n)^{1/(s^2-1)}\right)
\end{equation}
for any $s\ge 2$ and $f \ge c\;\!\f_{s,s+1}(n/2)$ with $c>2$.

\section{Concluding remarks}

The study of the Ramsey-Tur\'an number for small $s$-independence number brings several new problems and challenges. We believe that now the most interesting question is to decide whether the lower bounds given by Theorems~\ref{thm:main1} and~\ref{thm:main2} are nearly optimal for any $r\ge 2$. We believe that the upper bound in Theorem~\ref{thm:ub_delta} can be improved.

It is also  not hard to verify that the proofs of Theorems~\ref{thm:main1} and~\ref{thm:main2} can allow $r$ to grow with $s$. For fixed $\varepsilon$  and $\delta$, these theorems still hold for $r = r(s) \le c s^{1/5}$ for some small constant $c$ which may depend on $\varepsilon$ and $\delta$. Of course, since $\RT_s(n, K_{s+r}, n^\delta)$ is nondecreasing in $r$, we get some lower bound for larger $r$ as well, but this fact is not particularly satisfying. While we did not put much effort into allowing $r$ to grow large with $s$, new ideas would be needed to prove lower bounds on $\RT_s(n, K_{s+r}, n^\delta)$ that get better as $\delta$ increases and which apply to say, all $1 \le r \le s$. (Recall that the upper bound in Theorem~\ref{thm:ub_delta} applies to all $1 \le r \le s$, and that we have the lower bound~\eqref{eq:K2s} but this bound stays the same even for $\delta$ quite close to $1$.)

\section{Acknowledgment}
We are grateful to an anonymous referee for bringing to our attention reference~\cite{BHS}.


\end{document}